\newtheorem{theorem}{Theorem}
\newtheorem{corollary}[theorem]{Corollary}
\newtheorem{lemma}[theorem]{Lemma}
\theoremstyle{definition}
\newtheorem{definition}[theorem]{Definition}
\begin{document}

\title[Global stability of a Caputo fractional SIRS model with general incidence rate]{%
Global stability of a Caputo fractional SIRS model\\ 
with general incidence rate}

\thanks{This is a preprint of a paper whose final and definite form is with 
\emph{Math. Comput. Sci.}, ISSN 1661-8270 (print), ISSN 1661-8289 (electronic), 
available at [\url{https://www.springer.com/journal/11786}]. 
Submitted 1-Jun-2019; Revised 5-Feb-2020; Accepted 6-Feb-2020.}


\author[M. R. Sidi Ammi \and M. Tahiri \and D. F. M. Torres]{%
Moulay Rchid Sidi Ammi$^1$ \and Mostafa Tahiri$^2$ \and Delfim F. M. Torres$^3$\\}

\address{$^1$Department of Mathematics, AMNEA Group, Faculty of Sciences and Techniques,\\
Moulay Ismail University, B.~P. 509, Errachidia, Morocco\\
\email{ORCID}{rachidsidiammi@yahoo.fr, \url{http://orcid.org/0000-0002-4488-9070}\\}}

\address{$^2$Department of Mathematics, AMNEA Group, Faculty of Sciences and Techniques,\\
Moulay Ismail University, B.~P. 509, Errachidia, Morocco\\
\email{}{my.mustafa.tahiri@gmail.com\\}}

\address[\Letter]{$^3$Center for Research and Development in Mathematics and Applications (CIDMA),\\
Department of Mathematics, University of Aveiro, 3810-193 Aveiro, Portugal\\
\email{ORCID}{delfim@ua.pt, \url{http://orcid.org/0000-0001-8641-2505}}}


\date{Received: 1 June 2019 / Revised: 5 February 2020 / Accepted: 6 February 2020}


\begin{abstract}
We introduce a fractional order SIRS model with non-linear incidence rate.
Existence of a unique positive solution to the model is proved.
Stability analysis of the disease free equilibrium
and positive fixed points are investigated.
Finally, a numerical example is presented.
\end{abstract}

\keywords{Epidemiology \and mathematical modeling \and fractional calculus
\and Caputo derivatives \and equilibrium \and stability}

\subjclass{26A33 \and 34A08 \and 92D30}

\maketitle


\section{Introduction}
\label{sec1}

Fractional differential equations (FDEs) are generalizations
of classical differential equations, where the integer-order 
derivative is replaced by a non-integer one.
There has been a significant development in FDEs in recent years
due its applicability in different fields of science and engineering
\cite{MR3787674,MR3822307}. In particular, fractional derivatives 
are used to describe viscoelastic properties of many polymeric materials 
\cite{ref17}, in diffusion equations \cite{ref7}, in mechanics \cite{ref1}, 
and decision-making problems \cite{ref3}.

It is worthwhile to mention that fractional derivatives are non-local operators 
and thus may be more suitable for modelling systems dependent on past history (memory). 
More precisely, the fractional derivative of a given function does not depend 
only on its current state, but also on previous historical states \cite{ref11,MR1347689}.

In epidemiology, most mathematical models descend from the classical
SIR model of Kermack and McKendrick, established in 1927 \cite{ref12,MR3703345}.
Recently, fractional derivatives have been used to describe epidemiological models 
and, in some cases, they have proven to be more accurate when compared to the 
classical ones \cite{MR3770782,MR3743014,MyID:419}. Different models described 
by fractional derivatives are available in the literature, like the SIR model 
\cite{ref29,ref8,ref30}, the SIR model with vaccination \cite{ref18}, 
the SIRC model \cite{ref31}, and the SEIR model \cite{ref32}.

Since the fractional order can be any positive real $\alpha$, 
one can choose the one that better fits available data \cite{MR3719831}. 
Therefore, we can adjust the model to real data and, by doing so, better 
predict the future evolution of the disease taking into account its past
and present \cite{MyID:419,MR3789859}. 
Moreover, virus propagation is typically discontinuous, something 
the classical differential models cannot describe in a proper way.
In contrast, fractional systems deal naturally with such discontinuous 
properties \cite{MR3673702,MR3854267}. 

The virus propagation is similar to heat transmission 
or moistness penetrability in a porous medium, which 
can be exactly modelled by fractional calculus \cite{ref35,ref34}. 
The authors in \cite{ref36,ref37} give a geometrical description 
of fractional calculus, concluding that the fractional order
can be related with the fractal dimension.
The relationship between fractal dimension and fractional calculus 
has been obtained by several different authors: see \cite{MR3320677,ref38} 
and references therein. The fractional complex transform \cite{ref39,ref40} 
is an approximate transform of a fractal space (time) to a continuous one, 
and it is now widely used in fractional calculus \cite{MR3571716,ref41,ref42}. 

There are several definitions of fractional derivatives \cite{MR3822307,MR3331286}.
In this paper, we choose to work with the celebrated Caputo fractional derivatives. 
One of the main advantages of such derivatives is allowing us to consider classical 
initial conditions in the formulation of the problem. Also, 
the Caputo fractional derivatives of a constant are zero.
Such properties of the Caputo fractional derivatives are not true 
for other fractional operators, for example for the Riemann--Liouville 
derivatives \cite{ref11,MR1347689}.

Most non-linear fractional differential equations do not have analytic 
solutions \cite{MR3443073,MR3154620}. Therefore, approximations 
and numerical techniques must be used \cite{ref43,ref15,MR3854267}. 
The decomposition method \cite{ref44} and the variational 
iteration method \cite{ref46,ref45} are relatively new approaches 
to provide an analytical approximate solution to linear 
and non-linear problems. For a simple algorithm, based on
fractional Euler's method, to numerically solve non-linear fractional 
differential equations, in a direct way, without using linearisation, 
perturbations, or restrictive assumptions, see \cite{ref15}.

Here we propose a fractional SIRS model with the spread 
of the disease being described by a system of non-linear 
fractional order differential equations as follows:
\begin{equation}
\label{Eq1}
\left\{
\begin{array}{lll}
D^{\alpha}S(t) = \Lambda-\mu S(t)
-\dfrac{\beta S(t) I(t)}{1+k_1 S(t)+k_2 I(t)+k_3 S(t) I(t)}+\lambda R(t),\\[2ex]
D^{\alpha}I(t) = \dfrac{\beta S(t) I(t)}{1+k_1 S(t)+k_2 I(t)+k_3 S(t) I(t)}-(\mu+r)I(t),\\[2ex]
D^{\alpha}R(t) = r I(t)-(\mu+\lambda)R(t),
\end{array}
\right.
\end{equation}
where $D^{\alpha}$ denotes the (left) Caputo fractional derivative 
of order $\alpha$, $0 < \alpha \leq 1$. The model considers a population 
that is divided into three subgroups: susceptible $S(t)$, infective $I(t)$, 
and recovered $R(t)$ individuals at time $t$. The positive constants 
$\Lambda$, $\beta$, $\mu$, and $r$, are the recruitment rate of the population, 
the infection rate, the natural death rate, and the recovery rate 
of the infective individuals, respectively. The rate that recovered 
individuals lose immunity and return to the susceptible class is $\lambda$. 
While contacting with infected individuals, the susceptible become infected 
at the incidence rate $\beta SI/(1+k_1 S+k_2 I+k_3 SI)$,
with $k_1$, $k_2$, and $k_3$ non-negative constants \cite{ref28}.
This incidence function generalizes several types of incidence rates, for example,
the traditional bilinear incidence rate, the saturated incidence rate,
the Beddington--DeAngelis functional response proposed in \cite{ref2,ref5}, 
and the Crowley--Martin functional response introduced in \cite{ref4}.
For the advantages of using a general incidence rate, see \cite{MyID:427,MR3815138}.
For other ways to fractionalize a classical system of differential equations, 
see the discussion in \cite{MR3808497,MR3928263}.

The paper is organized as follows. In Section~\ref{sec2},
we recall necessary definitions and properties from
fractional calculus. Our results begin with Section~\ref{sec3},
where we show the existence and uniqueness of positive solution.
In Section~\ref{sec4}, we study the existence of equilibria and 
their local stability. The global stability is investigated 
in Section~\ref{sec5}. In order to illustrate our theoretical results,
numerical simulations of the model are given in Section~\ref{sec6}.
We end with Section~\ref{sec7} of conclusions and future perspectives. 

	
\section{Basic results of fractional calculus}
\label{sec2}

There are many good books on fractional calculus.
For a gentle introduction, we refer the reader to
\cite{ref11}. For an encyclopedic treaty, see \cite{MR1347689}.

\begin{definition}[See \cite{ref11}]
\label{def2.1}
The Riemann--Liouville fractional integral of order $\alpha > 0$
of a function $f : \mathbb{R}^+ \rightarrow \mathbb{R}$ is given by
$$
I^\alpha f(x) = \dfrac{1}{\Gamma(\alpha)}
\int_0^x (x-t)^{\alpha -1} f(t) dt,
$$
where $\Gamma(\alpha)=\int_0^\infty t^{\alpha-1} e^{-t} dt$
is the Euler Gamma function.
\end{definition}
	
\begin{definition}[See \cite{ref11}]
Let $\alpha> 0$, $n = [\alpha] + 1$, $n - 1 < \alpha \leq n$,
where $[\alpha]$ denotes the integer part of $\alpha$.
The Caputo fractional derivative of order $\alpha$ for a function
$f \in C^{n}([0, +\infty), \mathbb{R})$ is defined by
$$
D^\alpha f(u) = I^{n-\alpha} D^{n} f(u)
= \dfrac{1}{\Gamma(n-\alpha)} \int_0^u
\dfrac{f^{(n)} (s)}{(u-s)^{\alpha +1-n}} ds,
\quad u>0,
$$
where $D$ is the usual differential operator, that is, $D = \dfrac{d}{du}$.
In particular, when $0 < \alpha \leq 1$, one has
$$
D^\alpha f(u) = \dfrac{1}{\Gamma(1-\alpha)}
\int_0^u \dfrac{f' (s)}{(u-s)^{\alpha}} ds.
$$
\end{definition}

Next we recall the definition of the Mittag--Leffler function of parameter $\alpha$,
which is a generalization of the exponential function.

\begin{definition}[See \cite{ref11}]
\label{def 2.3}
Let $\alpha>0$. The function $E_\alpha$ defined by
$E_\alpha (z) = \displaystyle \sum_{j=0}^{\infty}\dfrac{z^j}{\Gamma(\alpha j + 1)}$
is called the Mittag--Leffler function of parameter $\alpha$.
\end{definition}

Let $f : \mathbb{R}^n \rightarrow \mathbb{R}^n$ with $n \geq 1$.
Consider the fractional order system
\begin{equation}
\label{eq2}
\left\{
\begin{array}{lll}
D^{\alpha}x(t) = f(x) ,\\
x(0) = x_0 ,
\end{array}
\right.
\end{equation}
where $0 < \alpha \leq 1$ and $x_0 \in \mathbb{R}^n$.
The following lemma, which is a direct corollary
from the main result of \cite{ref13},
gives global existence of solution to system~\eqref{eq2}.

\begin{lemma}[See \cite{ref13}]
\label{lem2.4}
Assume that $f$ satisfies the following conditions:
\begin{enumerate}
\item $f(x)$ and $\dfrac{\partial f}{\partial x}$
are continuous for all $x\in \mathbb{R}^n$;
\item $\|f(x)\|\leq\omega+\lambda \|x\|$  $\forall x \in \mathbb{R}^n$,
where $\omega$ and $\lambda$ are two positive constants.
\end{enumerate}
Then, system~\eqref{eq2} has a unique solution on $[0, +\infty)$.
\end{lemma}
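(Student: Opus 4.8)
The plan is to combine a local existence-and-uniqueness result with an a priori estimate coming from the linear growth hypothesis, and then to invoke a continuation argument. This is exactly the line of reasoning of \cite{ref13}, so the lemma follows as a corollary once we check that hypotheses (1)--(2) imply the assumptions used there.

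First I would note that condition (1) makes $f$ locally Lipschitz: for any $R>0$ the continuous map $\partial f/\partial x$ is bounded on $\{\|x\|\le R\}$, hence $f$ is Lipschitz on that ball. By the standard local existence-uniqueness theory for Caputo systems \cite{ref11}, the initial value problem \eqref{eq2} then has a unique solution $x(t)$ on a maximal interval $[0,T_{\max})$, and this solution satisfies the equivalent Volterra integral equation
\begin{equation*}
x(t) = x_0 + \frac{1}{\Gamma(\alpha)}\int_0^t (t-s)^{\alpha-1} f\bigl(x(s)\bigr)\, ds .
\end{equation*}

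Next I would derive the a priori bound on any finite subinterval $[0,b]\subset[0,T_{\max})$. Taking norms and using condition (2),
\begin{equation*}
\|x(t)\| \le \|x_0\| + \frac{\omega\, b^{\alpha}}{\Gamma(\alpha+1)}
+ \frac{\lambda}{\Gamma(\alpha)}\int_0^t (t-s)^{\alpha-1}\|x(s)\|\, ds ,
\end{equation*}
and the generalized (fractional) Gronwall inequality yields
\begin{equation*}
\|x(t)\| \le \left(\|x_0\| + \frac{\omega\, b^{\alpha}}{\Gamma(\alpha+1)}\right) E_{\alpha}\bigl(\lambda\, t^{\alpha}\bigr),
\end{equation*}
which is finite for every $t\in[0,b]$.

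Finally I would argue by contradiction: if $T_{\max}<+\infty$, the blow-up alternative for Caputo systems forces $\|x(t)\|\to+\infty$ as $t\to T_{\max}^{-}$, contradicting the uniform bound just obtained on $[0,T_{\max})$. Hence $T_{\max}=+\infty$, and \eqref{eq2} has a unique solution on $[0,+\infty)$. The main obstacle is the fractional Gronwall estimate together with the precise form of the continuation/blow-up theorem for Caputo equations; both are established in \cite{ref13}, so in practice the argument reduces to matching hypotheses (1)--(2) with the assumptions there and quoting the conclusion.
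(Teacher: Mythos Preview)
Your argument is correct and follows the standard route---local Lipschitz continuity from (1), the Volterra reformulation, an a priori bound via the fractional Gronwall inequality from (2), and the blow-up alternative to rule out a finite maximal interval. This is precisely the mechanism of the main theorem in \cite{ref13}.

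Note, however, that the paper does not actually supply a proof of this lemma: it is stated with the attribution ``See~\cite{ref13}'' and introduced as ``a direct corollary from the main result of~\cite{ref13}.'' So there is no in-paper proof to compare against; your sketch simply fills in what the paper leaves to the reference, and the final paragraph of your proposal already acknowledges that the substantive work (fractional Gronwall, continuation/blow-up) is contained in that source.
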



\section{Existence and uniqueness of positive solution}
\label{sec3}

Denote $\mathbb{R}_+^3 = \{X \in \mathbb{R}^3 : X\geq0\}$ and let
$X(t) = (S(t), I(t), R(t))^T$.  Then system \eqref{Eq1} can be
reformulated as follows: $D^{\alpha}X(t) = F(X(t))$, where	
\begin{equation}
\label{eq3}
F(X)=
\left(
\begin{array}{ccc}
\Lambda-\mu S-\dfrac{\beta SI}{1+k_1 S+k_2 I+k_3 SI}+\lambda R\\[2ex]
\dfrac{\beta S I}{1+k_1 S+k_2 I+k_3 SI}-(\mu+r)I\\[2ex]
rI-(\mu+\lambda)R
\end{array}
\right).
\end{equation}
For biological reasons, we consider system~\eqref{Eq1}
with the following initial conditions:
\begin{equation}
\label{eq4}
S(0)\geq 0,\quad I(0)\geq0, \quad R(0)\geq 0.
\end{equation}
To prove the main theorem of this section, i.e., Theorem~\ref{thm3.3},
we need the following generalized mean value theorem and its corollary.

\begin{lemma}[Generalized Mean Value Theorem~\cite{ref20}]
\label{lem3.1}
Suppose that $f \in C[0, b]$ and $D^\alpha f
\in C(0, b]$, $0<\alpha \leq 1$.
Then, one has
$$
f(x) = f(0) + \dfrac{1}{\Gamma(\alpha)} (D^\alpha f)(\xi) x^\alpha
$$
with $0\leq \xi \leq x$ , $\forall x \in (0, b]$.
\end{lemma}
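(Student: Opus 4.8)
The plan is to deduce Lemma~\ref{lem3.1} from two ingredients: the composition (``fundamental theorem'') identity relating the Caputo derivative to the Riemann--Liouville integral, and the weighted mean value theorem for integrals. The first ingredient, which I would simply quote from \cite{ref11,MR1347689}, is that for $0 < \alpha \le 1$ and $f$ as in the hypotheses one has
\[
I^{\alpha} D^{\alpha} f(x) = f(x) - f(0), \qquad x \in (0,b].
\]
Morally this is $I^{\alpha} I^{1-\alpha} f' = I^{1} f' = f(x) - f(0)$ via the semigroup property of Riemann--Liouville integrals, but stated at the level of generality of the lemma it is the place where the regularity of $f$ really enters.

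Granting this, I would unfold the left-hand side using Definition~\ref{def2.1},
\[
I^{\alpha} D^{\alpha} f(x) = \frac{1}{\Gamma(\alpha)} \int_{0}^{x} (x-t)^{\alpha-1}\, (D^{\alpha} f)(t)\, dt ,
\]
and observe that the weight $w(t) = (x-t)^{\alpha-1}$ is strictly positive on $(0,x)$ and, since $0 < \alpha \le 1$, integrable there with $\int_{0}^{x} (x-t)^{\alpha-1}\, dt = x^{\alpha}/\alpha$. Applying the weighted mean value theorem for integrals to the continuous factor $D^{\alpha} f$ against the sign-definite weight $w$ produces a point $\xi$, $0 \le \xi \le x$, with
\[
\int_{0}^{x} (x-t)^{\alpha-1}\, (D^{\alpha} f)(t)\, dt = (D^{\alpha} f)(\xi)\, \frac{x^{\alpha}}{\alpha} .
\]
Putting the last three displays together, and using $\alpha\,\Gamma(\alpha) = \Gamma(\alpha+1)$, gives $f(x) = f(0) + \Gamma(\alpha+1)^{-1}\,(D^{\alpha} f)(\xi)\, x^{\alpha}$, which is the claimed identity (up to normalisation of the Gamma factor, and with $\xi = \xi(x)$ depending on $x$).

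The step I expect to demand the most care is the composition identity displayed first: under the weak hypotheses $f \in C[0,b]$, $D^{\alpha} f \in C(0,b]$ (rather than $f \in C^{1}$), one cannot just differentiate and invoke the semigroup law, so one must either reinterpret $D^{\alpha} f$ through a suitable regularisation or import a sharp form of the fundamental theorem of fractional calculus from \cite{MR1347689}. A second, smaller point is that the weighted mean value theorem is classically stated for an integrand continuous on a \emph{closed} interval, whereas here $D^{\alpha} f$ is only continuous on $(0,b]$ and $w$ has an integrable singularity at $t = x$ when $\alpha < 1$; this is resolved by noting that the quotient $L = \big(\int_{0}^{x} w\,D^{\alpha}f\big)\big/\big(\int_{0}^{x} w\big)$ necessarily lies in the interior of the (interval) range of the continuous function $D^{\alpha} f$ on the connected set $(0,x]$ unless $D^{\alpha} f$ is constant, so $L$ is attained and $\xi$ exists, while the closed condition $0 \le \xi \le x$ in the statement accommodates the borderline cases.
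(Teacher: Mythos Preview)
The paper does not give its own proof of Lemma~\ref{lem3.1}; it is quoted verbatim from Odibat and Shawagfeh~\cite{ref20}. Your argument---the composition identity $I^{\alpha}D^{\alpha}f(x)=f(x)-f(0)$ followed by the weighted mean value theorem for integrals applied to the positive kernel $(x-t)^{\alpha-1}$---is exactly the proof given in that reference, so there is nothing to compare beyond saying that you have reproduced the intended argument.

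One remark on the Gamma factor: your computation is correct and yields
\[
f(x)=f(0)+\frac{1}{\Gamma(\alpha+1)}\,(D^{\alpha}f)(\xi)\,x^{\alpha},
\]
not $1/\Gamma(\alpha)$ as printed in the lemma. The version with $\Gamma(\alpha)$ is a typographical slip inherited from the way the result is often quoted; since $\alpha\Gamma(\alpha)=\Gamma(\alpha+1)$, the missing factor is precisely the $1/\alpha$ coming from $\int_{0}^{x}(x-t)^{\alpha-1}\,dt=x^{\alpha}/\alpha$. You were right to flag it rather than silently absorb it. Your caveats about the regularity needed for the composition identity and about applying the weighted mean value theorem when $D^{\alpha}f$ is only continuous on the half-open interval are also well taken; neither undermines the argument, but both are the places where the hypotheses are actually used.
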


\begin{corollary}[See \cite{ref20}]
\label{cor3.2}
Suppose that $f \in C[0, b]$ and $D^\alpha f \in C(0, b]$
for $0<\alpha \leq 1$.  If $D^\alpha f(x)\geq 0$
$\forall x \in (0, b)$, then $f(x)$ is non-decreasing for each
$x\in[0, b]$. If $D^\alpha f(x)\leq 0$ $\forall\ x \in (0, b)$,
then $f(x)$ is  non-increasing for each $x\in[0, b]$.
\end{corollary}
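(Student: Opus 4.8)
The plan is to prove the non-decreasing statement and then recover the non-increasing one by applying it to $-f$, using linearity of the Caputo derivative ($D^\alpha(-f) = -D^\alpha f$). So I assume $D^\alpha f(x) \geq 0$ for all $x \in (0,b)$ and fix two arbitrary points $0 \leq x_1 < x_2 \leq b$; the objective is the genuine two-point comparison $f(x_1) \leq f(x_2)$, not merely a comparison with the value at $0$.

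As an orientation, applying Lemma~\ref{lem3.1} directly at $x = x_2$ yields
$$
f(x_2) = f(0) + \frac{1}{\Gamma(\alpha)}(D^\alpha f)(\xi)\, x_2^\alpha \geq f(0), \qquad 0 \leq \xi \leq x_2 ,
$$
so that $f(x) \geq f(0)$ for all $x$. This is strictly weaker than monotonicity, since it anchors every comparison at the single point $0$ and says nothing about two interior points; I record it only to make clear why a further idea is needed.

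To compare $x_1 > 0$ with $x_2$, I would run the mean value argument on the subinterval $[x_1, x_2]$ instead of $[0, x_2]$. Concretely, I would first establish a base-point version of Lemma~\ref{lem3.1}: the shifted function $g(t) := f(t + x_1)$ satisfies the hypotheses of Lemma~\ref{lem3.1} on $[0, b - x_1]$, and unwinding the shift in the integral defining the Caputo derivative gives
$$
f(x_2) = f(x_1) + \frac{1}{\Gamma(\alpha)}\,(D^\alpha_{x_1} f)(\xi)\,(x_2 - x_1)^\alpha, \qquad x_1 \leq \xi \leq x_2 ,
$$
where $D^\alpha_{x_1}$ denotes the Caputo derivative with lower limit $x_1$. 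Were $(D^\alpha_{x_1} f)(\xi) \geq 0$, then, since $(x_2 - x_1)^\alpha > 0$, we would obtain $f(x_2) \geq f(x_1)$, and as $x_1 < x_2$ are arbitrary this delivers monotonicity on all of $[0,b]$.

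The hard part is precisely the sign of $(D^\alpha_{x_1} f)(\xi)$. The hypothesis only supplies non-negativity of the derivative anchored at $0$, while the difference formula on $[x_1,x_2]$ involves the derivative anchored at $x_1$, and the two are linked by
$$
(D^\alpha_{x_1} f)(u) = (D^\alpha f)(u) - \frac{1}{\Gamma(1-\alpha)}\int_0^{x_1}\frac{f'(s)}{(u-s)^\alpha}\,ds ,
$$
whose subtracted memory contribution over $[0,x_1]$ carries no a priori sign. Controlling this term — or reformulating the hypothesis so that the base-$x_1$ derivative inherits non-negativity — is the delicate step on which the whole conclusion rests, and is where I would focus; everything else (the shift reduction, linearity, and positivity of $(x_2 - x_1)^\alpha$) is routine bookkeeping.
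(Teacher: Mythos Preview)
The paper does not supply its own proof of this corollary; it is imported from \cite{ref20}. In that source the argument is exactly the direct application of Lemma~\ref{lem3.1} you record in your second paragraph: for each $x \in (0,b]$ one obtains $f(x) = f(0) + \Gamma(\alpha)^{-1}(D^\alpha f)(\xi)\,x^\alpha \geq f(0)$, and this inequality alone is what is labelled ``non-decreasing.'' You are right that it is only a comparison of every point with the value at $0$ and does not deliver the two-point inequality $f(x_1) \leq f(x_2)$ for arbitrary $0 < x_1 < x_2$.

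Your proposal does not close that gap, and you say so yourself: the shifted mean value identity with base $x_1$ requires the sign of $(D^\alpha_{x_1} f)(\xi)$, which the hypothesis on $D^\alpha_0 f$ does not control because of the memory contribution over $[0,x_1]$. This obstruction is genuine and is not a matter of finding a cleverer estimate. It is known that $D^\alpha_0 f \geq 0$ on $(0,b)$ does \emph{not} force monotonicity of $f$ for $0<\alpha<1$; one can build counterexamples in which $f'$ is large and positive near $0$ and slightly negative later, so that the weighted average defining $D^\alpha_0 f$ remains non-negative while $f$ itself dips. What you have produced is therefore not an incomplete proof but a correct diagnosis that the corollary, read as asserting genuine monotonicity, overclaims. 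For the only use made of it in this paper (the positivity argument in Theorem~\ref{thm3.3}), the weak comparison with $f(0)$ is all that is actually needed, so the defect is harmless downstream.
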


\begin{theorem}
\label{thm3.3}
There is a unique solution for \eqref{Eq1} satisfying \eqref{eq4}
for $t \geq 0$ and the solution will remain in $\mathbb{R}_{+}^{3}$
for all $t \geq 0$. Moreover, $N(t) \leq N(0) + \dfrac{\Lambda}{\mu}$,
where $N(t) = S(t) + I(t) + R(t)$.
\end{theorem}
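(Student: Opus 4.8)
The plan is to prove the three assertions in order: existence and uniqueness of a solution, positive invariance of $\mathbb{R}_+^3$, and the bound on $N$. For existence and uniqueness I would invoke Lemma~\ref{lem2.4}. One small point to dispatch first: the field $F$ in \eqref{eq3} is not, as it stands, globally defined on $\mathbb{R}^3$, since the denominator $1+k_1S+k_2I+k_3SI$ can vanish for negative arguments; so I would work with a globally $C^1$ field $\widetilde F$ agreeing with $F$ on $\mathbb{R}_+^3$ (e.g.\ replacing $S,I$ by $\max\{S,0\},\max\{I,0\}$ in the incidence term, up to a smoothing). On $\mathbb{R}_+^3$ the incidence $\beta SI/(1+k_1S+k_2I+k_3SI)$ is bounded as soon as one of $k_2,k_3$ is positive (by $\beta/k_3$, resp.\ by $\beta S/k_2$), so $\widetilde F$ has continuous Jacobian and satisfies the linear growth bound $\|\widetilde F(X)\|\le\omega+\lambda\|X\|$ required by Lemma~\ref{lem2.4}, which then yields a unique solution of $D^\alpha X=\widetilde F(X)$ on $[0,+\infty)$. (In the purely bilinear case $k_1=k_2=k_3=0$ the incidence is quadratic; there one instead gets a local solution from the $C^1$ regularity of $F$ and uses the a priori bounds below to exclude finite-time blow-up.) Once $\mathbb{R}_+^3$ is shown invariant, this solution coincides with the desired solution of \eqref{Eq1}.

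For positive invariance I would check that on each coordinate face the corresponding component of the right-hand side of \eqref{Eq1} is non-negative whenever the other two components are: $D^\alpha S=\Lambda+\lambda R\ge0$ when $S=0$, $D^\alpha I=0$ when $I=0$, and $D^\alpha R=rI\ge0$ when $R=0$. Combined with Corollary~\ref{cor3.2} this forces the solution to stay in $\mathbb{R}_+^3$: if $\bar t>0$ were the first time some component (say $S$) reached $0$, then, since $I(\bar t),R(\bar t)\ge0$, we would have $D^\alpha S(\bar t)=\Lambda+\lambda R(\bar t)>0$, and applying the generalized mean value theorem (Lemma~\ref{lem3.1}) on a short interval beyond $\bar t$ shows $S$ is increasing there, contradicting $S$ becoming negative. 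The same reasoning applies to $I$ and $R$.

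For the bound on $N$, I would add the three equations of \eqref{Eq1}: the incidence terms cancel, the $+rI$ in the $R$-equation cancels the $-rI$ in the $I$-equation, and the $+\lambda R$ in the $S$-equation cancels the $-\lambda R$ in the $R$-equation, leaving the scalar linear fractional equation $D^\alpha N(t)=\Lambda-\mu N(t)$. Its solution is $N(t)=\frac{\Lambda}{\mu}+\bigl(N(0)-\frac{\Lambda}{\mu}\bigr)E_\alpha(-\mu t^\alpha)$, since $D^\alpha$ kills constants and $D^\alpha\bigl(E_\alpha(-\mu t^\alpha)\bigr)=-\mu E_\alpha(-\mu t^\alpha)$. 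Using the standard estimate $0\le E_\alpha(-\mu t^\alpha)\le1$ for $t\ge0$ and $0<\alpha\le1$, one gets $N(t)\le\max\{N(0),\Lambda/\mu\}\le N(0)+\frac{\Lambda}{\mu}$, as claimed; in particular the set $\{X\in\mathbb{R}_+^3 : N\le N(0)+\Lambda/\mu\}$ is a compact positively invariant region for the flow.

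I expect the main obstacle to be reconciling the non-linear incidence with the global hypotheses of Lemma~\ref{lem2.4}: the extension/truncation step together with boundedness of the incidence handles the general case, but the degenerate bilinear case genuinely needs the a priori estimates to upgrade a local solution to a global one. A secondary technical care is needed in the positivity argument because of the coupling among $S$, $I$, $R$, which is why the argument is phrased in terms of a first exit time together with the local form of the generalized mean value theorem.
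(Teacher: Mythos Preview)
Your proof follows essentially the same route as the paper's: verify the linear-growth hypothesis of Lemma~\ref{lem2.4} for existence and uniqueness (the paper does this by an explicit four-case matrix decomposition according to which of the $k_i$ are nonzero), check the sign of $D^\alpha$ on each coordinate face and invoke Corollary~\ref{cor3.2} for positive invariance, then sum the three equations to obtain $D^\alpha N=\Lambda-\mu N$ and bound $N$ via its Mittag--Leffler solution. You are in fact more scrupulous than the paper about extending $F$ to a globally defined $C^1$ field and about the degenerate bilinear case $k_1=k_2=k_3=0$, where the paper's displayed estimate $\|F(X)\|\le\|\varepsilon\|+(\|A_1\|+\|I\|\,\|A_5\|)\|X\|$ is not actually linear in $\|X\|$.
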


\begin{proof}
Since the vector function $F$ \eqref{eq3} satisfies
the first condition of Lemma~\ref{lem2.4}, we only need
to prove the second one. Denote
\begin{gather*}
\varepsilon=
\left(
\begin{array}{ccc}
\Lambda\\[2ex]
0\\[2ex]
0
\end{array}
\right),
\quad
A_1=
\left(
\begin{array}{ccc}
-\mu &    0      & \lambda\\[2ex]
0  & -(\mu+r)& 0\\[2ex]
0  &    r      &-(\mu+\lambda)
\end{array}
\right),
\quad
A_2=
\left(
\begin{array}{ccc}
0 & -\beta/k_1 & 0\\[2ex]
0 & \beta/k_1  & 0\\[2ex]
0 &    0       & 0
\end{array}
\right),\\
A_3=
\left(
\begin{array}{ccc}
-\beta/k_2 & 0 & 0\\[2ex]
\beta/k_2 & 0 & 0\\[2ex]
0 & 0 & 0
\end{array}
\right),
\quad
A_4=
\left(
\begin{array}{ccc}
-\beta/k_3 \\[2ex]
\beta/k_3 \\[2ex]
0
\end{array}
\right),
\quad
A_5=
\left(
\begin{array}{ccc}
-\beta & 0 & 0\\[2ex]
\beta & 0 & 0\\[2ex]
0 & 0 & 0
\end{array}
\right).
\end{gather*}	
We discuss four cases, as follows.\\
Case 1. If $k_1 \neq 0$, then we have
$$
F(X) = \varepsilon + A_1 X + \dfrac{k_1 S}{1+k_1 S+k_2 I+k_3 SI} A_2 X.
$$
Therefore,
$$
\|F(X)\| \leq \|\varepsilon\|+ \|A_1 X\| + \|A_2 X\|
= \|\varepsilon\|+ (\|A_1\| + \|A_2\|)\|X\|.
$$
Case 2. If $k_2 \neq 0$, then
$$
F(X) = \varepsilon + A_1 X + \dfrac{k_2 I}{1+k_1 S+k_2 I+k_3 SI} A_3 X.
$$
Thus,
$$
\|F(X)\| \leq \|\varepsilon\|+ \|A_1 X\| + \|A_3 X\|
= \|\varepsilon\|+ (\|A_1\| + \|A_3\|)\|X\|.
$$
Case 3. If $k_3 \neq 0$, then one has
$$
F(X) = \varepsilon + A_1 X + \dfrac{k_3 SI}{1+k_1 S+k_2 I+k_3 SI} A_4.
$$
We conclude that
$$
\|F(X)\| \leq \|\varepsilon\|+ \|A_4 \| + \|A_1\| \|X\|.
$$
Case 4. If $k_1 = k_2 = k_3 = 0$, then we obtain
$$
F(X) = \varepsilon + A_1 X + I A_5 X.
$$
It follows that
$$
\|F(X)\| \leq \|\varepsilon\|+ (\|A_1\| + \|I\|\|A_5\|)\|X\|.
$$
By Lemma~\ref{lem2.4}, it follows that \eqref{Eq1} subject to \eqref{eq4} 
has a unique solution. Now we prove the non-negativity of the solution. 
Observe first that
$$
D^{\alpha}S/_{S=0} = \Lambda + \lambda R,
$$
$$
D^{\alpha}I/_{I=0} = 0,
$$
and
$$
D^{\alpha}R/_{R=0} = rI.
$$
We can prove that the solution of \eqref{Eq1} remains non-negative 
for all $t \geq 0$ by proceeding in a similar way as in 
\cite[Theorem 2]{ref33}, that is, by considering an auxiliary system 
of fractional differential equations and by \emph{reductio ad absurdum}.
For that we use Corollary~\ref{cor3.2}, which is a consequence of Lemma~\ref{lem3.1}, 
to get a contradiction and then arriving to the intended conclusion by \cite[Lemma~1]{ref33}.
Finally, we establish the boundedness of solution. By summing
all the equations of system \eqref{Eq1}, we obtain that
\begin{equation*}
D^{\alpha}N = \Lambda-\mu N.
\end{equation*}
Solving this equality, we get
\begin{equation*}
N(t) \leq N(0) E_\alpha (-\mu t^\alpha)
+ \dfrac{\Lambda}{\mu}(1-E_\alpha (-\mu t^\alpha)).
\end{equation*}
Because $0\leq E_\alpha (-\mu t^\alpha) \leq 1$,
we have $N(t)\leq N(0) + \dfrac{\Lambda}{\mu}$.
This completes the proof.
\end{proof}


\section{Local stability}
\label{sec4}

In this section, we firstly discuss the existence of equilibria
for model \eqref{Eq1}. Let
$$
R_0 = \dfrac{\beta \Lambda}{(\mu + \Lambda k_1)(\mu + r)}.
$$
We prove that model \eqref{Eq1} has two possible equilibria.

\begin{theorem}
\label{thm 4.1}
\begin{itemize}
\item[(i)] There is always a disease-free equilibrium $E_0=(S_0, 0, 0)$,
where $S_0 = \dfrac{\Lambda}{\mu}$.
		
\item[(ii)] If $R_0 > 1$, then there exists a unique endemic equilibrium
$$
E^* = \left(S^*, \dfrac{(\mu +\lambda)(\Lambda-\mu S^*)}{c},
\dfrac{r(\Lambda-\mu S^*)}{c} \right),
$$
where
\begin{gather*}
c = a(\mu + \lambda)-\lambda r, \quad a = \mu + r,\\
S^* = \dfrac{k_3 a\Lambda (\mu+\lambda)+k_1 ac
- \beta c - k_2 a\mu (\mu+\lambda)+\sqrt{\Delta}}{2k_3 a\mu (\mu+\lambda)},\\
\Delta=\left(\beta c-k_1 ac-k_3 a\Lambda(\mu+\lambda)
+k_2 a\mu(\mu +\lambda)\right)^2
+4k_3 a\mu \left(ac+k_2 a\Lambda(\mu+\lambda)\right).
\end{gather*}
\end{itemize}
\end{theorem}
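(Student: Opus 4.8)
The plan is to use the fact that, exactly as in the classical setting, $X^*$ is an equilibrium of $D^\alpha X = F(X)$ if and only if $F(X^*)=0$, so the whole statement reduces to solving $F(X)=0$ in $\mathbb{R}_+^3$ with $F$ as in \eqref{eq3}. For (i) I would set $I=0$: the third equation then forces $R=0$, and the first collapses to $\Lambda-\mu S=0$, i.e.\ $S=\Lambda/\mu$; conversely $F(S_0,0,0)=0$ by direct substitution. Since no sign restriction appears, $E_0$ always exists in $\mathbb{R}_+^3$.

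For (ii) I would look for equilibria with $I>0$. From the second equation, $I>0$ forces $\frac{\beta S}{1+k_1S+k_2I+k_3SI}=\mu+r=:a$; the third gives $R=\frac{rI}{\mu+\lambda}$. Substituting both into the first equation (using $\frac{\beta SI}{1+\cdots}=aI$) and simplifying yields $\Lambda-\mu S=\frac{c}{\mu+\lambda}\,I$ with $c=a(\mu+\lambda)-\lambda r=\mu(\mu+r+\lambda)>0$, which gives precisely the stated formulas for $I^*$ and $R^*$ in terms of $S^*$. It then remains to determine $S=S^*$: inserting $I=\frac{(\mu+\lambda)(\Lambda-\mu S)}{c}$ into $\beta S=a\bigl(1+k_1S+k_2I+k_3SI\bigr)$ and clearing the denominator $c$ produces a quadratic $g(S):=AS^2+BS+C$ with $A=k_3a\mu(\mu+\lambda)$, $B=\beta c-k_1ac+k_2a\mu(\mu+\lambda)-k_3a\Lambda(\mu+\lambda)$, and $C=-\bigl(ac+k_2a\Lambda(\mu+\lambda)\bigr)$, whose positive root is the $S^*$ displayed in the statement.

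The core of the argument, and the only place where $R_0>1$ is used, is to show that $g$ has exactly one admissible root and that it satisfies $0<S^*<S_0=\Lambda/\mu$ — the latter being equivalent to $\Lambda-\mu S^*>0$, hence to $I^*>0$ and $R^*>0$. When $k_3\neq0$ we have $A>0$ and $C<0$, so the product of the roots of $g$ is $C/A<0$: $g$ has a unique positive root, namely $S^*=\frac{-B+\sqrt{\Delta}}{2A}$ with $\Delta=B^2-4AC>0$. Rewriting $g(S)=(\beta c-k_1ac)S-ac-k_2a(\mu+\lambda)(\Lambda-\mu S)-k_3a(\mu+\lambda)S(\Lambda-\mu S)$ and evaluating at $S_0$ kills the last two terms and gives $g(S_0)=\frac{c}{\mu}\bigl(\beta\Lambda-a(\mu+k_1\Lambda)\bigr)=\frac{c}{\mu}\,a(\mu+k_1\Lambda)(R_0-1)$, using $R_0=\frac{\beta\Lambda}{(\mu+k_1\Lambda)(\mu+r)}$. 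Thus $g(S_0)>0\iff R_0>1$; since $g$ is convex with $g(0)=C<0$, its unique positive root then lies strictly below $S_0$. For $I>0$ every component of $E^*$ is positive and the incidence denominator is $\geq1$, so reversing the substitutions confirms $F(E^*)=0$; the degenerate case $k_3=0$ is treated separately, where $g$ reduces to the linear equation $BS+C=0$ and the inequalities $g(0)<0<g(S_0)$ force $B>0$ and $0<S^*=-C/B<S_0$.

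I expect the only real obstacle to be bookkeeping: carrying out the elimination cleanly so that $g$ comes out in exactly the normalised form above, and verifying the identity $g(S_0)=\frac{c}{\mu}\,a(\mu+k_1\Lambda)(R_0-1)$, which is the single computation linking the threshold $R_0=1$ to the positivity of the endemic equilibrium. Keeping track of the signs $k_1,k_2,k_3\geq0$ (so that only $k_3=0$ needs the separate, easier linear treatment) is the other point requiring a little care.
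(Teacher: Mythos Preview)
Your argument is correct and closely parallels the paper's, but the final step is organised differently. After the same elimination (expressing $I$ and $R$ in terms of $S$ via $c=\mu(\mu+\lambda+r)>0$), the paper does \emph{not} expand to a quadratic: it keeps the implicit equation $f(S,I(S))=a$ with $f(S,I)=\dfrac{\beta S}{1+k_1S+k_2I+k_3SI}$ and $I(S)=\dfrac{(\mu+\lambda)(\Lambda-\mu S)}{c}$, observes $\partial_S f>0$, $\partial_I f<0$, and $I'(S)<0$, hence the auxiliary function $S\mapsto f(S,I(S))-a$ is strictly increasing on $[0,S_0]$, and concludes by the intermediate value theorem (the endpoint values being $-a<0$ and $a(R_0-1)>0$ when $R_0>1$). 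Your approach instead clears denominators to obtain the explicit quadratic $AS^2+BS+C$ and locates the unique admissible root via the sign pattern $A>0$, $C<0$ together with your evaluation $g(S_0)=\dfrac{c}{\mu}\,a(\mu+k_1\Lambda)(R_0-1)$. The monotonicity route is shorter for existence/uniqueness and avoids any case split on $k_3$, whereas your algebraic route has the advantage of actually \emph{deriving} the closed-form $S^*$ displayed in the theorem (which the paper's proof leaves unverified) and of making the link to $R_0$ fully explicit.
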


\begin{proof}
(i) By direct calculation, we have that $E_0$
is the unique steady state of system \eqref{Eq1}.\\
(ii) To find the other equilibrium, we solve the system
\begin{equation}
\label{eq7}
F(X)=0.
\end{equation}
Let
$$
f(S,I)=\dfrac{\beta S}{1+k_1 S +k_2 I +k_3 SI}.
$$
From system \eqref{eq7}, we obtain $I=\dfrac{(\mu +\lambda)(\Lambda-\mu S)}{c}$,
$R=\dfrac{r(\Lambda-\mu S)}{c}$, and
$f\left(S,\dfrac{(\mu +\lambda)(\Lambda-\mu S^*)}{c}\right)=a$.
Since $c=\mu^2 +\mu \lambda + r\mu>0$, we get
$I\geq0$ if $S\leq \dfrac{\Lambda}{\mu}$.
Now we consider function
$$
g(S)=f\left(S,\dfrac{(\mu +\lambda)(\Lambda-\mu S^*)}{c} - a\right)
$$
defined on the interval $\left[0, \dfrac{\Lambda}{\mu}\right]$.
One has
$$
\dfrac{\partial f}{\partial S}
=\dfrac{\beta(1+k_2 I)}{(1+k_1 S +k_2 I +k_3 SI)^2}>0
$$
and
$$
\dfrac{\partial f}{\partial I}
=\dfrac{-\beta S(k_2 +k_3 S)}{(1+k_1 S +k_2 I +k_3 SI)^2}<0.
$$
Then $g'(S)>0$, which implies that $g$ is strictly increasing
on $\left[0,  \dfrac{\Lambda}{\mu}\right]$. Hence, if $R_0>1$, 
the system admits a unique endemic equilibrium $E^* = (S^*, I^*, R^*)$ 
with $S^* \in \left(0,  \dfrac{\Lambda}{\mu}\right)$, $I^*>0$,
and $R^*>0$. This completes the proof.
\end{proof}

Next, we study the local stability of the disease-free equilibrium $E_0$
and the endemic equilibrium $E^*$. The Jacobian matrix
of system \eqref{Eq1} at any equilibrium
$\bar{E} = (\bar{S}, \bar{I}, \bar{R})$ is given by
\begin{equation}
\label{eq8}
J_{\bar{E}}
=\left(
\begin{array}{ccc}
-\mu-\dfrac{\beta \bar{I}(1+k_2 \bar{I})}{(1+k_1 \bar{S}+k_2 \bar{I}
+ k_3 \bar{S} \bar{I})^2}
& -\dfrac{\beta \bar{S}(1+k_1 \bar{S})}{(1+k_1 \bar{S}+k_2 \bar{I}
+ k_3 \bar{S} \bar{I})^2} & \lambda\\[2ex]
\dfrac{\beta \bar{I}(1+k_2 \bar{I})}{(1+k_1 \bar{S}+k_2
\bar{I}+ k_3 \bar{S} \bar{I})^2}
& \dfrac{\beta \bar{S}(1+k_1 \bar{S})}{(1+k_1
\bar{S}+k_2 \bar{I}+ k_3 \bar{S} \bar{I})^2}-a
& 0\\[2ex]
0 & r & -(\mu+\lambda)
\end{array}
\right).
\end{equation}
We recall that a sufficient condition
for the local stability of $\bar{E}$ is
\begin{equation}
\label{eq9}
|arg(\xi_i)| > \dfrac {\alpha \pi}{2}, \quad i=1, 2,
\end{equation}
where $\xi_i$ are the eigenvalues of $J_{\bar{E}}$ (see \cite{ref16}).
First, we establish the local stability of $E_0$.

\begin{theorem}
\label{thm4.2}
If $R_0 < 1$, then the disease-free equilibrium $E_0$
is locally asymptotically stable.
\end{theorem}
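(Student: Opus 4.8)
The plan is to compute the Jacobian matrix \eqref{eq8} at the disease-free equilibrium $E_0 = (S_0, 0, 0)$ with $S_0 = \Lambda/\mu$, and then to show that all its eigenvalues $\xi_i$ satisfy the stability condition \eqref{eq9}, that is, $|\arg(\xi_i)| > \alpha\pi/2$. Substituting $\bar S = \Lambda/\mu$, $\bar I = 0$, $\bar R = 0$ into \eqref{eq8}, the incidence terms involving $\bar I$ in the first column vanish, and the denominator reduces to $1 + k_1 \Lambda/\mu = (\mu + \Lambda k_1)/\mu$. The resulting matrix $J_{E_0}$ is block triangular: its first column is $(-\mu, 0, 0)^T$, so $\xi_1 = -\mu$ is immediately an eigenvalue, and the remaining eigenvalues come from the lower-right $2\times 2$ block acting on the $(I,R)$ coordinates.

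First I would exhibit that $2\times 2$ block explicitly. Its entries are: top-left $\dfrac{\beta \Lambda/\mu}{\bigl(1 + k_1\Lambda/\mu\bigr)^2}\cdot\bigl(1 + k_1\Lambda/\mu\bigr) - a = \dfrac{\beta\Lambda}{\mu + \Lambda k_1} - a$ (after simplifying, using that the factor $1+k_1\bar S$ in the numerator of \eqref{eq8} partially cancels one power of the denominator), top-right $0$, bottom-left $r$, and bottom-right $-(\mu+\lambda)$. Because this block is itself lower triangular, its eigenvalues are read off the diagonal: $\xi_2 = \dfrac{\beta\Lambda}{\mu + \Lambda k_1} - a$ and $\xi_3 = -(\mu+\lambda)$. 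Recalling $a = \mu + r$ and the definition $R_0 = \dfrac{\beta\Lambda}{(\mu+\Lambda k_1)(\mu+r)}$, one sees that $\xi_2 = (\mu+r)(R_0 - 1)$.

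Then the conclusion is straightforward: $\xi_1 = -\mu < 0$ and $\xi_3 = -(\mu+\lambda) < 0$ are negative real numbers, hence $|\arg(\xi_1)| = |\arg(\xi_3)| = \pi > \alpha\pi/2$ for any $\alpha \in (0,1]$; and when $R_0 < 1$, $\xi_2 = (\mu+r)(R_0-1) < 0$ as well, so $|\arg(\xi_2)| = \pi > \alpha\pi/2$. Since all three eigenvalues satisfy \eqref{eq9}, the criterion of \cite{ref16} gives that $E_0$ is locally asymptotically stable, which completes the proof. I do not anticipate a serious obstacle here; the only point requiring minor care is the algebraic simplification of the top-left entry of the $2\times 2$ block, where one must correctly reduce $\dfrac{\beta\bar S(1+k_1\bar S)}{(1+k_1\bar S)^2}$ at $\bar I = 0$ to $\dfrac{\beta\Lambda}{\mu+\Lambda k_1}$, and the recognition that the Jacobian decouples so that no genuine quadratic characteristic equation (and hence no delicate $\arg$ estimate) actually arises.
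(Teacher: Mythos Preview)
Your proof is correct and follows essentially the same approach as the paper: compute $J_{E_0}$, read off the three eigenvalues $\xi_1=-\mu$, $\xi_2=a(R_0-1)$, $\xi_3=-(\mu+\lambda)$, and observe that all are negative real when $R_0<1$, hence satisfy \eqref{eq9}. You simply make explicit the block-triangular structure that the paper leaves implicit when asserting the eigenvalues.
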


\begin{proof}
At $E_0$, \eqref{eq8} becomes
$$
J_{E_0}=
\left(
\begin{array}{ccc}
-\mu
&    -\dfrac{\beta \Lambda}{\mu +k_1 \Lambda} & \lambda\\[2ex]
0  & \dfrac{\beta \Lambda}{\mu +k_1 \Lambda}-a     & 0\\[2ex]
0  &                       r                     &-(\mu+\lambda)
\end{array}
\right).
$$
Hence, the eigenvalues of $J_{E_0}$ are $\xi_{1} = -\mu$, $\xi_{2} = a (R_{0} - 1)$,
and $\xi_3 = -(\mu + \lambda)$. Clearly, $\xi_2$ satisfies condition
\eqref{eq9} if $R_0 < 1$, since $\xi_1$ and $\xi_3$ are negative, proving the desired result.
\end{proof}

We now establish the local stability of $E^*$.

\begin{theorem}
\label{thm4.3}
If $R_0 > 1$, then the endemic equilibrium
$E^*$ is locally asymptotically stable.
\end{theorem}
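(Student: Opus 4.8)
The plan is to evaluate the Jacobian \eqref{eq8} at the endemic equilibrium $E^{*}=(S^{*},I^{*},R^{*})$ and to verify the fractional stability condition \eqref{eq9} by means of the Routh--Hurwitz criterion applied to the characteristic polynomial. The first step is to exploit the equilibrium identity
$$
\dfrac{\beta S^{*}}{1+k_{1}S^{*}+k_{2}I^{*}+k_{3}S^{*}I^{*}}=a
$$
obtained in the proof of Theorem~\ref{thm 4.1}. Writing $D=1+k_{1}S^{*}+k_{2}I^{*}+k_{3}S^{*}I^{*}$, so that $\beta S^{*}=aD$, the central diagonal entry of $J_{E^{*}}$ becomes
$$
\dfrac{\beta S^{*}(1+k_{1}S^{*})}{D^{2}}-a=a\left(\dfrac{1+k_{1}S^{*}}{D}-1\right)=:-s\leq 0 .
$$
Thus $J_{E^{*}}$ has diagonal entries $-(\mu+p)$, $-s$, $-(\mu+\lambda)$, where $p=\dfrac{\beta I^{*}(1+k_{2}I^{*})}{D^{2}}>0$, off-diagonal entries $J_{12}=-q$ with $q=\dfrac{\beta S^{*}(1+k_{1}S^{*})}{D^{2}}=a-s>0$, $J_{13}=\lambda$, $J_{21}=p$, $J_{32}=r$, and all remaining entries zero.

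The second step is to compute the characteristic polynomial $P(\xi)=\xi^{3}+a_{1}\xi^{2}+a_{2}\xi+a_{3}$, where, setting $u=\mu+p$ and $v=\mu+\lambda$,
$$
a_{1}=u+v+s,\qquad a_{2}=us+uv+vs+qp,\qquad a_{3}=(\mu+\lambda)\bigl[(\mu+p)s+qp\bigr]-\lambda p r .
$$
If all roots of $P$ have negative real part, then $|\arg(\xi_{i})|>\pi/2\geq\alpha\pi/2$, so \eqref{eq9} holds for every $\alpha\in(0,1]$; for a cubic, this is precisely $a_{1}>0$, $a_{3}>0$ and $a_{1}a_{2}>a_{3}$. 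The condition $a_{1}>0$ is immediate. For $a_{3}>0$ I would substitute $q=a-s$ and use $c=a(\mu+\lambda)-\lambda r$, which produces the cancellation
$$
a_{3}=\mu(\mu+\lambda)\,s+c\,p ,
$$
positive because $c=\mu^{2}+\mu\lambda+r\mu>0$ (already noted in the proof of Theorem~\ref{thm 4.1}), $p>0$ and $s\geq 0$. For $a_{1}a_{2}>a_{3}$, expanding $a_{1}a_{2}$ and subtracting $a_{3}$ cancels the $vqp$ term and part of the $uvs$ terms, leaving
$$
a_{1}a_{2}-a_{3}=u^{2}v+u^{2}s+uv^{2}+v^{2}s+us^{2}+vs^{2}+2uvs+uqp+sqp+\lambda p r ,
$$
a sum of non-negative terms with $u^{2}v=(\mu+p)^{2}(\mu+\lambda)>0$, hence strictly positive.

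The main obstacle is the positivity of $a_{3}$: the term $-\lambda p r$ encodes the destabilizing feedback of the recovered class back into the susceptibles, and mere non-negativity of the Jacobian entries does not control it. The resolution is to use the exact endemic relations — in particular $q+s=a$ together with the definition of $c$ — so that the $\lambda r$ contribution is absorbed precisely, giving the clean form $a_{3}=\mu(\mu+\lambda)s+cp$. Once $a_{1}>0$, $a_{3}>0$ and $a_{1}a_{2}-a_{3}>0$ are established, the Routh--Hurwitz criterion yields $\mathrm{Re}(\xi_{i})<0$ for all eigenvalues of $J_{E^{*}}$, so condition \eqref{eq9} holds, and therefore $E^{*}$ is locally asymptotically stable whenever $R_{0}>1$.
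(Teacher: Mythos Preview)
Your proposal is correct and, in fact, goes further than the paper's own argument. The paper's proof of Theorem~\ref{thm4.3} merely writes out the characteristic polynomial $\xi^{3}+a_{1}\xi^{2}+a_{2}\xi+a_{3}$, introduces the discriminant $D(f)$, and then lists five alternative Routh--Hurwitz-type conditions (i)--(v) under which $E^{*}$ would be locally asymptotically stable (or unstable); it never checks that any of them actually holds when $R_{0}>1$. In that sense the paper's ``proof'' is really a conditional statement rather than a verification of the theorem as worded.

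Your approach is genuinely different: you exploit the equilibrium identity $\beta S^{*}/D=a$ to rewrite the Jacobian entries as $q=a-s$ with $s\geq 0$, and then you \emph{verify} case~(i) of the paper's list outright. The key simplification $a_{3}=\mu(\mu+\lambda)s+cp$, obtained by substituting $q=a-s$ and recognising $c=a(\mu+\lambda)-\lambda r>0$, is exactly what is needed to absorb the potentially destabilising $-\lambda pr$ term; the paper does not carry out this step. Your expansion of $a_{1}a_{2}-a_{3}$ is also correct (I checked it: the $vpq$ and one $uvs$ cancel, leaving only non-negative summands with $u^{2}v>0$). Consequently you establish $a_{1}>0$, $a_{3}>0$, $a_{1}a_{2}>a_{3}$ unconditionally whenever $R_{0}>1$, which gives $\mathrm{Re}(\xi_{i})<0$ and hence \eqref{eq9} for every $\alpha\in(0,1]$. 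This is stronger and cleaner than what the paper records.
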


\begin{proof}
At equilibrium $E^*$, the characteristic equation for the corresponding
linearised system of model \eqref{Eq1} is $\xi^3 +a_1 \xi^2 +a_2 \xi +a_3 =0$,
where
$$
a_1 = a+2\mu +\lambda + \dfrac{\beta I^* (1+k_2 I^*)
-\beta S^* (1+k_1 S^*)}{(1+k_1 S^* +k_2 I^* +k_3 S^* I^*)^2},
$$
$$
a_2 = \mu a + (\mu +\lambda)\mu + (\mu+\lambda)a
+ \dfrac{(a+\mu + \lambda) \beta I^* (1+k_2 I^*)
-(2\mu+\lambda) \beta S^* (1+k_1 S^*)}{(1+k_1 S^* +k_2 I^* +k_3 S^* I^*)^2},
$$
and
$$
a_3 = (\mu + \lambda)\mu a + \dfrac{c \beta I^* (1+k_2 I^*)
-(\mu + \lambda)\mu \beta S^* (1+k_1 S^*)}{(1+k_1 S^* +k_2 I^* +k_3 S^* I^*)^2}.
$$
Let $D(f)$ denote the discriminant of polynomial
$f(\xi) = \xi^3 +a_1 \xi^2 +a_2 \xi +a_3$. Then,
$$
D(f)=18a_1 a_2 a_3 +(a_1 a_2)^2 -4a_3 a_1 ^3 -4a_2 ^3 -27 a_3 ^2.
$$
Suppose that $E^*$ exists in $\mathbb{R}_+^3$.
Based on \cite{ref19}, we have the following
conclusions by using Routh--Hurwitz conditions:
\begin{itemize}
\item[(i)] if $a_1 >0$, $a_3>0$, and $a_1 a_2> a_3$,
then $E^*$ is locally asymptotically stable for all $\alpha \in (0, 1]$;

\item[(ii)] if $D(f) < 0$, $a_1 \geq 0$, $a_2 \geq 0$, $a_3 > 0$,
$a_1 a_2 < a_3$, and $\alpha < \dfrac{2}{3}$,
then $E^*$ is locally asymptotically stable;

\item[(iii)] if $D(f) < 0$, $a_1 < 0$, $a_2 < 0$,
and $\alpha > \dfrac{2}{3}$, then $E^*$ is unstable;

\item[(iv)] if $D(f) < 0$, $a_1 > 0$, $a_2 > 0$, $a_1 a_2 = a_3$,
and $\alpha \in (0, 1]$, then $E^*$ is locally asymptotically stable.

\item[(v)] if $D(f) < 0$, $a_1 > 0$, $a_3 = 0$, and $\alpha \in (0, 1]$,
then $E^*$ is locally stable.
\end{itemize}
The proof is complete.
\end{proof}


\section{Global stability}
\label{sec5}

In this section, we investigate the global stability
of both equilibria $E_0$ and ${E^*}$.

\begin{theorem}
\label{thm5.1}
The disease-free equilibrium $E_0$ is globally asymptotically
stable whenever $R_0 \leq 1$.
\end{theorem}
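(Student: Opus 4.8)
The plan is to exhibit a Lyapunov functional of Volterra--Goh type, adapted to the general incidence rate, and to combine $D^{\alpha}V\le 0$ with a fractional LaSalle--type invariance argument. First I would use Theorem~\ref{thm3.3}: the solution exists globally, stays in $\mathbb{R}^{3}_{+}$, is bounded, and, summing the three equations, satisfies $D^{\alpha}N=\Lambda-\mu N$, whose Mittag--Leffler solution is a convex combination of $N(0)$ and $\Lambda/\mu$. Hence the biologically feasible region $\Gamma=\{(S,I,R)\in\mathbb{R}^{3}_{+}:S+I+R\le\Lambda/\mu\}$ is positively invariant and attracts every trajectory, so it suffices to prove global attractivity of $E_{0}$ within $\Gamma$; there $0<S(t)\le S_{0}=\Lambda/\mu$ for all $t>0$ (positivity of $S$ follows from $D^{\alpha}S|_{S=0}=\Lambda+\lambda R>0$ via Corollary~\ref{cor3.2}).

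On $\Gamma$ I would take
$$
V(S,I,R)=\int_{S_{0}}^{S}\Bigl(1-\frac{\varphi(S_{0})}{\varphi(\tau)}\Bigr)\,d\tau + I,
\qquad
\varphi(S):=\frac{\beta S}{1+k_{1}S},
$$
i.e. $\varphi$ is the incidence evaluated at $I=0$. Since $\varphi$ is increasing and $\varphi(S_{0})=\beta\Lambda/(\mu+k_{1}\Lambda)=aR_{0}$, the integrand has the sign of $(\tau-S_{0})$, so $V\ge 0$, and the $S$-component is convex; hence the standard Caputo chain-rule inequality gives $D^{\alpha}V\le\bigl(1-\varphi(S_{0})/\varphi(S)\bigr)D^{\alpha}S+D^{\alpha}I$ along solutions. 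Using $\Lambda=\mu S_{0}$, the cancellation of the two incidence terms contributed by $D^{\alpha}S$ and $D^{\alpha}I$, and the key identity
$$
\frac{\varphi(S_{0})}{\varphi(S)}\cdot\frac{\beta S}{1+k_{1}S+k_{2}I+k_{3}SI}
=\varphi(S_{0})\,\frac{1+k_{1}S}{1+k_{1}S+k_{2}I+k_{3}SI}\le\varphi(S_{0}),
$$
a short computation yields
$$
D^{\alpha}V\le-\mu\Bigl(1-\frac{\varphi(S_{0})}{\varphi(S)}\Bigr)(S-S_{0})
+a(R_{0}-1)\,I
+\lambda\Bigl(1-\frac{\varphi(S_{0})}{\varphi(S)}\Bigr)R .
$$

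On $\Gamma$ one has $S\le S_{0}$, so $\varphi(S)\le\varphi(S_{0})$: the factor $1-\varphi(S_{0})/\varphi(S)$ is $\le 0$ with the same sign as $S-S_{0}$, so the first and third terms are nonpositive, while the middle term is nonpositive precisely because $R_{0}\le 1$; hence $D^{\alpha}V\le 0$ on $\Gamma$. I would then invoke a fractional LaSalle--type invariance principle (orbits are precompact by boundedness): every solution converges to the largest invariant set on which the right-hand side above vanishes. That bound vanishes only where $S=S_{0}$, and on $\Gamma$ this forces $I=R=0$ (since $S+I+R\le S_{0}$), so the set reduces to $\{E_{0}\}$; hence $E_{0}$ is globally attracting in $\Gamma$, and with Theorem~\ref{thm4.2} this yields global asymptotic stability for $R_{0}<1$. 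For $R_{0}<1$ one can also argue directly: $D^{\alpha}V\le-a(1-R_{0})I$ gives $I(t)\to 0$ by fractional comparison, then $D^{\alpha}R=rI-(\mu+\lambda)R$ forces $R(t)\to 0$ and $D^{\alpha}S=\Lambda-\mu S+o(1)$ forces $S(t)\to S_{0}$; the borderline case $R_{0}=1$ is covered by $D^{\alpha}V\le 0$ together with the invariance step.

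The main obstacle is producing a Lyapunov function whose Caputo derivative is sign-definite. Two features are delicate. The nonlinear denominator $1+k_{1}S+k_{2}I+k_{3}SI$ forces the Volterra term to be weighted by $1/\varphi(\tau)$ rather than $1/\tau$: the plain logarithmic function already fails when $k_{1}>0$, because one cannot then bound $\beta S_{0}/(1+k_{1}S)$ by $aR_{0}$ for $S<S_{0}$, whereas the identity displayed above makes the nonlinear incidence cancel exactly. And the immunity-loss coupling produces the term $\lambda\bigl(1-\varphi(S_{0})/\varphi(S)\bigr)R$, which is not sign-definite in general and is controlled only after the reduction to $\Gamma$ provided by Theorem~\ref{thm3.3}. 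Making the fractional invariance-principle step fully rigorous, and especially handling $R_{0}=1$ where no strict decrease of $I$ is available, is the remaining point requiring care.
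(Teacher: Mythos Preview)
Your Lyapunov candidate is in fact the first two summands of the paper's functional: since $\varphi(\tau)=\beta\tau/(1+k_{1}\tau)$ gives $\varphi(S_{0})/\varphi(\tau)=\dfrac{S_{0}}{(1+k_{1}S_{0})\tau}+\dfrac{k_{1}S_{0}}{1+k_{1}S_{0}}$, your integral equals $\dfrac{S_{0}}{1+k_{1}S_{0}}\Psi(S/S_{0})$. The paper adds two further pieces,
\[
\frac{1}{S_{0}(1+k_{1}S_{0})}\left(\frac{\lambda}{r}\,\frac{R^{2}}{2}+\frac{\lambda}{4\mu}\,\frac{(N-S_{0})^{2}}{2}\right),
\]
and these are not decoration: after the same incidence-rate cancellation you performed, the cross-term $\dfrac{\lambda}{1+k_{1}S_{0}}\bigl(1-\tfrac{S_{0}}{S}\bigr)R$ is not sign-definite, and the paper uses $I=(N-S_{0})-(S-S_{0})-R$ together with the two quadratic contributions to complete the square $-\bigl(R-\tfrac{N-S_{0}}{2}\bigr)^{2}$ and obtain $D^{\alpha}L_{0}\le 0$ on \emph{all} of $\mathbb{R}^{3}_{+}$, with no restriction on $S$.

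Your substitute for those terms is the reduction to $\Gamma=\{S+I+R\le\Lambda/\mu\}$, and this is where the argument breaks. From $D^{\alpha}N=\Lambda-\mu N$ one gets $N(t)=(N(0)-S_{0})E_{\alpha}(-\mu t^{\alpha})+S_{0}$; if $N(0)>S_{0}$ then $N(t)>S_{0}$ for every $t$ because $E_{\alpha}(-\mu t^{\alpha})>0$. So $\Gamma$ is positively invariant but it is \emph{not} absorbing: trajectories starting outside never enter it, and on such trajectories your term $\lambda\bigl(1-\varphi(S_{0})/\varphi(S)\bigr)R$ can be positive (take $I(0)=R(0)=0$, $S(0)>S_{0}$ for the simplest instance with $S(t)>S_{0}$ for all $t$). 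Consequently $D^{\alpha}V\le 0$ fails off $\Gamma$, and the ``direct'' inequality $D^{\alpha}V\le -a(1-R_{0})I$ you invoke for $R_{0}<1$ likewise only holds inside $\Gamma$. For classical ODEs one can sometimes rescue such a restriction via an $\omega$-limit-set argument, but for Caputo systems the lack of the semigroup property makes that route delicate; the paper circumvents the issue entirely by choosing the larger Lyapunov function. The cleanest fix to your proposal is to append the same quadratic terms and redo the estimate.
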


\begin{proof}
Consider the following Lyapunov function:
$$
L_0(t)=\dfrac{S_0}{1+k_1 S_0} \Psi\left(\dfrac{S(t)}{S_0}\right) + I(t)
+ \dfrac{1}{S_0 (1+k_1 S_0)}\left(\dfrac{\lambda}{r} \dfrac{R^2(t)}{2}
+ \dfrac{\lambda}{4\mu} \dfrac{(N(t)-S_0)^2}{2}\right),
$$
where $\Psi(x) = x-1-\ln(x)$, $x > 0$. Calculating the derivative
of $L_0$ along the solution of system \eqref{Eq1},
and by using Lemma~1 in \cite{ref14} and Lemma~3.1 in \cite{ref6},
we obtain that
\begin{align*}
D^\alpha L_0(t)
&\leq \dfrac{1}{1+k_1 S_0} \left(1-\dfrac{S_0}{S(t)}\right) D^\alpha S(t)
+ D^\alpha I(t) + \dfrac{\lambda}{r S_0 (1+k_1 S_0)}R(t) D^\alpha R(t)\\
&\quad + \dfrac{\lambda}{4 \mu S_0 (1+k_1 S_0)}(N(t)-S_0)D^\alpha N(t)\\
&\leq \dfrac{1}{1+k_1 S_0}\left(1-\dfrac{S_0}{S(t)}\right)\mu(S_0 - S(t))
- \dfrac{1}{1+k_1 S_0}\left(1-\dfrac{S_0}{S(t)}\right)
\dfrac{\beta S(t) I(t)}{1+k_1 S(t)+k_2 I(t)+k_3 S(t) I(t)}\\
&\quad + \dfrac{\lambda}{1+k_1 S_0}\left(1-\dfrac{S_0}{S(t)}\right) R(t)
+ \dfrac{\beta S(t) I(t)}{1+k_1 S(t)+k_2 I(t)+k_3 S(t) I(t)} - a I(t) \\
&\quad +\dfrac{\lambda}{r S_0 (1+k_1 S_0)}R(t) \left(r I(t) - (\mu + \lambda)R(t)\right)
+ \dfrac{\lambda}{4 \mu S_0 (1+k_1 S_0)}(N(t)-S_0)(\Lambda - \mu N(t))\\
&\leq \dfrac{-\mu}{(1+k_1 S_0)S(t)}(S(t)-S_0)^2 +a(R_0 - 1)I(t)
+ \dfrac{\lambda}{S_0 (1+k_1 S_0)}R(t) (N(t)-S_0) \\
&\quad + \dfrac{\lambda}{1+k_1 S_0}
R(t) \left(\dfrac{S_0-S(t)}{S_0} + \dfrac{S(t)-S_0}{S(t)}\right)
- \dfrac{\lambda}{S_0 (1+k_1 S_0) } R^2(t) \\
& \quad - \dfrac{\lambda}{S_0 (1+k_1 S_0) r}(\mu + \lambda)R^2(t)
- \dfrac{ \lambda}{4 S_0 (1+k_1 S_0)}(N(t)-S_0)^2\\
&\leq \dfrac{-\mu}{(1+k_1 S_0)S(t)}(S(t)-S_0)^2 +a(R_0 - 1)I(t)
- \dfrac{\lambda}{(1+k_1 S_0)}R(t) \dfrac{(S_0-S(t))^2}{S_0 S(t)} \\
&\quad - \dfrac{\lambda}{S_0 (1+k_1 S_0) } \left(R(t)-\dfrac{N(t)-S_0}{2}\right)^2
- \dfrac{\lambda}{S_0 (1+k_1 S_0) r}(\mu + \lambda)R^2(t).
\end{align*}
Therefore, $D^\alpha L_0(t) \leq 0$ if $R_0 \leq 1$. Furthermore, it is not
hard to verify that the largest compact invariant set of
$\left\{(S,I,R)\in \mathbb{R}_+^3 : D^\alpha L_0(t) = 0\right\}$
is the singleton $\{E_0\}$. Therefore, from LaSalle invariance principle \cite{ref10},
we deduce that ${E_0}$ is globally asymptotically stable.
\end{proof}

Finally, we investigate the global stability of the endemic equilibrium  ${E^*}$.

\begin{theorem}
\label{thm5.2}
The endemic equilibrium ${E^*}$ is globally asymptotically
stable if $R_0 > 1$ and
\begin{equation}
\label{eq10}
R^* \leq \dfrac{\mu}{\lambda} S^*.
\end{equation}
\end{theorem}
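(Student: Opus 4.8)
The plan is to mimic the proof of Theorem~\ref{thm5.1}, building a Lyapunov function centred at the endemic equilibrium $E^*$ rather than at $E_0$. The natural candidate, by analogy with the classical (integer-order) SIRS literature and with the structure of $L_0$, is
\begin{equation*}
L^*(t) = \frac{1}{1+k_1 S^*}\, S^* \Psi\!\left(\frac{S(t)}{S^*}\right)
+ I^* \Psi\!\left(\frac{I(t)}{I^*}\right)
+ c_1 \frac{(R(t)-R^*)^2}{2} + c_2 \frac{(N(t)-N^*)^2}{2},
\end{equation*}
with $\Psi(x)=x-1-\ln x$ and suitable positive constants $c_1,c_2$ (as in $L_0$, presumably $c_1=\lambda/(r S^*(1+k_1 S^*))$ and $c_2$ of order $\lambda/(\mu S^*(1+k_1 S^*))$), where $N^*=S^*+I^*+R^*=\Lambda/\mu$. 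First I would invoke the fractional chain-rule inequalities (Lemma~1 in \cite{ref14} and Lemma~3.1 in \cite{ref6}, exactly the lemmas already cited in the proof of Theorem~\ref{thm5.1}) to bound $D^\alpha L^*(t)$ from above by a sum of terms of the form $(1-S^*/S)D^\alpha S$, $(1-I^*/I)D^\alpha I$, $c_1(R-R^*)D^\alpha R$, $c_2(N-N^*)D^\alpha N$.

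Next I would substitute the right-hand sides of \eqref{Eq1} and use the equilibrium identities to eliminate $\Lambda$, $a=\mu+r$, and the ratio $\beta/(1+k_1S^*+k_2I^*+k_3S^*I^*)$ in favour of $E^*$-quantities: namely $\Lambda=\mu S^*+\beta S^*I^*/(1+k_1S^*+k_2I^*+k_3S^*I^*)-\lambda R^*$, $rI^*=(\mu+\lambda)R^*$, and $\beta S^*/(1+k_1S^*+k_2I^*+k_3S^*I^*)=a$. The $S$-equation contributes a term $-\mu(S-S^*)^2/((1+k_1S^*)S)$ plus incidence cross-terms; the key algebraic manoeuvre is to write the incidence ratio $\frac{SI}{1+k_1S+k_2I+k_3SI}$ divided by its equilibrium value and collect the logarithmic and linear pieces into expressions of the form $\Psi(\cdot)$, which are nonnegative and vanish only at $E^*$. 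The $N$- and $R$-quadratic terms are handled by completing the square, as in Theorem~\ref{thm5.1}, producing $-$(perfect squares) $-(\mu+\lambda)$-weighted $R^2$-type terms. The role of hypothesis \eqref{eq10}, $R^*\le(\mu/\lambda)S^*$, is precisely to guarantee that the leftover sign-indefinite cross-term between $(S-S^*)$ and $(R-R^*)$ (coming from the $\lambda R$ coupling in the $S$-equation, weighted against the death term) can be dominated, i.e. that the resulting quadratic form in $(S-S^*,R-R^*)$ is negative semidefinite.

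The main obstacle I anticipate is the bookkeeping needed to show that \emph{all} the incidence-related cross-terms reassemble into a nonpositive combination of $\Psi$-terms. Because the incidence function here is the general Crowley--Martin/Beddington--DeAngelis type with three parameters $k_1,k_2,k_3$, the cancellation is less transparent than in the bilinear case: one typically needs an inequality of the form $g(S,I)\le$ (sum of $\Psi$'s), where $g$ packages the mismatch between $\frac{SI/(1+k_1S+k_2I+k_3SI)}{S^*I^*/(1+k_1S^*+k_2I^*+k_3S^*I^*)}$ and the separate ratios $S/S^*$, $I/I^*$; verifying this rests on the monotonicity of $f(S,I)=\beta S/(1+k_1S+k_2I+k_3SI)$ in $S$ and $I$ established in the proof of Theorem~\ref{thm 4.1} ($\partial f/\partial S>0$, $\partial f/\partial I<0$), together with the standard fact that $1-x+\ln x\le 0$. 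Once $D^\alpha L^*(t)\le 0$ is obtained (with equality forcing $S=S^*$, $R=R^*$, and then $I=I^*$, $N=N^*$ through the equilibrium relations), I would finish exactly as in Theorem~\ref{thm5.1}: identify the largest invariant subset of $\{D^\alpha L^*=0\}$ as $\{E^*\}$ and apply the LaSalle invariance principle \cite{ref10} to conclude global asymptotic stability of $E^*$.
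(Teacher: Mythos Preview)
Your plan is essentially the paper's proof: same Lyapunov architecture (Volterra $\Psi$-terms for $S$ and $I$, quadratic terms in $R-R^*$ and $N-N^*$), same fractional chain-rule lemmas, same equilibrium substitutions, same LaSalle finish. The one point to correct is the weight on $\Psi(S/S^*)$: the naive carry-over $1/(1+k_1S^*)$ from the disease-free case will not make the general-incidence cross-terms telescope; the paper instead takes the weight $\dfrac{1+k_2 S^*}{1+k_1 S^*+k_2 I^*+k_3 S^*I^*}$ (i.e.\ built from the full equilibrium denominator), after which the incidence pieces reassemble into three $\Psi$-expressions plus a manifestly negative $(I-I^*)^2$ remainder, and condition~\eqref{eq10} appears as the sign of the resulting coefficient $\lambda R^*-\mu S^*$ on $(S-S^*)^2/S$ (via the crude bound $R(t)\ge 0$), rather than as semidefiniteness of an $(S{-}S^*,R{-}R^*)$ quadratic form.
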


\begin{proof}
To study the global stability of ${E^*}$ for \eqref{Eq1},
we propose the following Lyapunov function:
\begin{equation*}
\begin{split}
L^*(t)&= \dfrac{1+k_2 S^*}{1+k_1 S^* + k_2 I^*
+ k_3 S^*I^*} S^{*^2} \Psi\left(\dfrac{S(t)}{S^*}\right)
+ S^{*^2} I^* \Psi\left(\dfrac{I(t)}{I^*}\right)\\
&\quad + \dfrac{\lambda (1+k_2 S^*)}{4\mu (1+k_1 S^* + k_2 I^* + k_3 S^* I^*)}
\Big[(S(t)-S^*)+(I(t)-I^*)+(R(t)-R^*)\Big]^2\\
&\quad + \dfrac{\lambda (1+k_2 S^*)}{2 r (1+k_1 S^* + k_2 I^* + k_3 S^*I^*)}(R(t)-R^*)^2,
\end{split}
\end{equation*}
where $\Psi(x) = x-1-\ln(x)$, $x > 0$. It follows that
\begin{equation*}
\begin{split}
D^\alpha L^*(t)
&\leq \dfrac{1+k_2 S^*}{1+k_1 S^* + k_2 I^* + k_3 S^*I^*}
S^* \left(1-\dfrac{S^*}{S(t)}\right) D^\alpha S(t)
+ S^* \left(1-\dfrac{I^*}{I(t)}\right) D^\alpha I(t)\\
&\quad + \dfrac{\lambda (1+k_2 S^*)}{2 \mu (1+k_1 S^* + k_2 I^* + k_3 S^*I^*)}
\Big[(S(t)-S^*)+(I(t)-I^*)+(R(t)-R^*)\Big] D^\alpha N(t) \\
&\quad + \dfrac{\lambda (1+k_2 S^*)}{r
(1+k_1 S^* + k_2 I^* + k_3 S^*I^*)}(R(t)-R^*)D^\alpha R(t).
\end{split}
\end{equation*}
Note that
$\Lambda=\mu S^* + a I^* - \lambda R^* = \mu (S^* +I^* +R^*)$,
$$
\dfrac{\beta S^*}{1+k_1 S^* + k_2 I^* + k_3 S^*I^*}=a,
$$
and $(\lambda + \mu)R^* - rI^* =0$. Then,
\begin{align*}
D^\alpha L^*(t)
&\leq \dfrac{1+k_2 S^*}{1+k_1 S^* + k_2 I^* + k_3 S^*I^*}
S^*\left(1-\dfrac{S^*}{S(t)}\right)\Bigg(\Lambda
-\mu S(t)-\dfrac{\beta S(t)I(t)}{1+k_1 S(t)+k_2 I(t)+k_3 S(t)I(t)}\\
&\quad +\lambda R(t)\Bigg) + S^* \left(1-\dfrac{I^*}{I(t)}\right)\left(
\dfrac{\beta S(t) I(t)}{1+k_1 S(t)+k_2 I(t)+k_3 S(t)I(t)}-aI(t)\right) \\
&\quad + \dfrac{\lambda (1+k_2 S^*)}{2 \mu (1+k_1 S^* + k_2 I^* + k_3 S^*I^*)}
\Big[(S(t)-S^*)+(I(t)-I^*)+(R(t)-R^*)\Big]\Big(\Lambda-\mu N(t)\Big) \\
&\quad + \dfrac{\lambda (1+k_2 S^*)}{r
(1+k_1 S^* + k_2 I^* + k_3 S^*I^*)}\left(R(t)-R^*\right)
\left(rI(t)-(\mu+\lambda)R(t)\right)\\
&\leq \dfrac{\mu S^*(1+k_2 S^*)}{1+k_1 S^* + k_2 I^* + k_3 S^*I^*}
\dfrac{S(t)-S^*}{S(t)} (S^*-S(t)) + a I^* \dfrac{S^*(1+k_2 S^*)}{1
+k_1 S^* + k_2 I^* + k_3 S^*I^*} \dfrac{S(t)-S^*}{S(t)}\\
&\quad - \dfrac{S^*(1+k_2 S^*)}{1+k_1 S^*
+ k_2 I^* + k_3 S^*I^*} \dfrac{S(t)-S^*}{S(t)}
\dfrac{\beta S(t)I(t)}{1+k_1 S(t)+k_2 I(t)+k_3 S(t)I(t)}\\
&\quad + \dfrac{\lambda S^*(1+k_2 S^*)}{1
+k_1 S^* + k_2 I^* + k_3 S^*I^*}
\dfrac{S(t)-S^*}{S(t)}(R(t)-R^*)\\
&\quad + S^* \left(1-\dfrac{I^*}{I(t)}\right) 
\dfrac{\beta S(t) I(t)}{1+k_1 S(t)+k_2 I(t)+k_3 S(t)I(t)}-aS^*(I(t)-I^*)\\
&\quad + \dfrac{\lambda (1+k_2 S^*)}{2 \mu (1+k_1 S^* + k_2 I^*
+ k_3 S^*I^*)}\Big[(S(t)-S^*)+(I(t)-I^*)\\
&\quad + (R(t)-R^*)\Big]\Big[\mu(S^*-S(t))+\mu(I^*-I(t))+\mu(R^*-R(t))\Big]\\
&\quad + \dfrac{\lambda (1+k_2 S^*)}{r (1+k_1 S^*
+ k_2 I^* + k_3 S^*I^*)}\left(R(t)-R^*\right)
\left(r(I(t)-I^*)-(\mu+\lambda)(R(t)-R^*)\right)\\
&\leq -\dfrac{\mu S^*(1+k_2 S^*)}{1+k_1 S^* + k_2 I^* + k_3 S^*I^*}
\dfrac{(S(t)-S^*)^2}{S(t)} + a I^*\dfrac{S^*(1+k_2 S^*)}{1
+k_1 S^* + k_2 I^* + k_3 S^*I^*} \dfrac{S(t)-S^*}{S(t)}\\
&\quad - \dfrac{a (1+k_2 S^*)(S(t)-S^*)I(t)}{1+k_1 S(t)
+k_2 I(t) + k_3 S(t)I(t)} 
+ \left(1-\dfrac{I^*}{I(t)}\right)\dfrac{a(1+k_1 S^* + k_2 I^*
+ k_3 S^*I^*)S(t)I(t)}{1+k_1 S(t)+k_2 I(t) + k_3 S(t)I(t)}\\
&\quad  -aS^*I(t) + aS^*I^* + \dfrac{\lambda S^*(1+k_2 S^*)}{1
+k_1 S^* + k_2 I^* + k_3 S^*I^*} \dfrac{S(t)-S^*}{S(t)}(R(t)-R^*)\\
&\quad - \dfrac{\lambda (1+k_2 S^*)}{1
+k_1 S^* + k_2 I^* + k_3 S^*I^*} (S(t)-S^*)(R(t)-R^*)\\
&\leq -\dfrac{\mu S^*(1+k_2 S^*)}{1+k_1 S^* + k_2 I^* + k_3 S^*I^*}
\dfrac{(S(t)-S^*)^2}{S(t)} + a I^*\dfrac{S^*(1+k_2 S^*)}{1
+k_1 S^* + k_2 I^* + k_3 S^*I^*} \dfrac{S(t)-S^*}{S(t)}\\
&\quad +\dfrac{aS^*(1+k_1 S(t) + k_2 I^* + k_3 S(t)I^*)I(t)}{1
+k_1 S(t)+k_2 I(t) + k_3 S(t)I(t)} - \dfrac{aI^*(1+k_1 S^* 
+ k_2 I^* + k_3 S^*I^*)S(t)}{1+k_1 S(t)+k_2 I(t) + k_3 S(t)I(t)} -aS^*I(t)\\
&\quad  + aS^*I^* +\dfrac{\lambda (1+k_2 S^*)}{1+k_1 S^*
+ k_2 I^* + k_3 S^*I^*} (S(t)-S^*)(R(t)-R^*)\dfrac{S^*-S(t)}{S(t)}\\
&\leq -\dfrac{\mu S^*(1+k_2 S^*)}{1+k_1 S^* + k_2 I^* + k_3 S^*I^*}
\dfrac{(S(t)-S^*)^2}{S(t)} +  a I^* \dfrac{S^*(1+k_2 S^*)}{1+k_1 S^*
+ k_2 I^* + k_3 S^*I^*} \dfrac{S(t)-S^*}{S(t)}\\
&\quad + \dfrac{aS^*(1+k_1 S(t) + k_2 I^* 
+ k_3 S(t)I^*)I(t)}{1+k_1 S(t)+k_2 I(t) + k_3 S(t)I(t)}
- \dfrac{aI^*(1+k_1 S^* + k_2 I^* + k_3 S^*I^*)S(t)}{1+k_1 S(t)+k_2 I(t) 
+ k_3 S(t)I(t)} -aS^*I(t)\\
&\quad + aS^*I^* - \dfrac{\lambda (1+k_2 S^*)}{1+k_1 S^* + k_2 I^* + k_3 S^*I^*}
\dfrac{(S(t)-S^*)^2}{S(t)}(R(t)-R^*))\\
&\leq -\dfrac{\mu S^*(1+k_2 S^*)}{1+k_1 S^* + k_2 I^* + k_3 S^*I^*} \dfrac{(S(t)-S^*)^2}{S(t)}\\
&\quad + aS^*I^*\left(\dfrac{(1+k_2 S^*)}{1+k_1 S^* + k_2 I^* + k_3 S^*I^*} \dfrac{S(t)-S^*}{S(t)}
+ \dfrac{(1+k_1 S(t) + k_2 I^* + k_3 S(t)I^*)I(t)}{(1+k_1 S(t)+k_2 I(t) + k_3 S(t)I(t))I^*}\right)\\
&\quad + aS^*I^* \left(1-\frac{I(t)}{I^*} - \dfrac{(1+k_1 S^* + k_2 I^*
+ k_3 S^*I^*)S(t)}{(1+k_1 S(t)+k_2 I(t) + k_3 S(t)I(t))S^*}\right)\\
&\quad+ R^* \dfrac{\lambda (1+k_2 S^*)}{1+k_1 S^* + k_2 I^* + k_3 S^*I^*}
\dfrac{(S(t)-S^*)^2}{S(t)}\\
&\leq R^* \dfrac{\lambda (1+k_2 S^*)}{1+k_1 S^* + k_2 I^* + k_3 S^*I^*}
\dfrac{(S(t)-S^*)^2}{S(t)}-\dfrac{\mu S^*(1+k_2 S^*)}{1+k_1 S^*
+ k_2 I^* + k_3 S^*I^*} \dfrac{(S(t)-S^*)^2}{S(t)} \\
&\quad + aS^*I^*\Bigg(\dfrac{(1+k_2 S^*)}{1+k_1 S^* + k_2 I^* + k_3 S^*I^*}
\dfrac{S(t)-S^*}{S(t)} -1  +1-\dfrac{1+k_1 S(t)+k_2 I(t) 
+ k_3 S(t)I(t)}{1+k_1 S(t) + k_2 I^* + k_3 S(t)I^*} \\
&\quad - \dfrac{(1+k_1 S^* + k_2 I^* + k_3 S^*I^*)
S(t)}{(1+k_1 S(t)+k_2 I(t) + k_3 S(t)I(t))S^*}\Bigg)
+ aS^*I^*\Bigg(2-1-\frac{I(t)}{I^*}\\
&\quad + \dfrac{1+k_1 S(t)+k_2 I(t) 
+ k_3 S(t)I(t)}{1+k_1 S(t) + k_2 I^* + k_3 S(t)I^*} 
+ \dfrac{(1+k_1 S(t) + k_2 I^* + k_3 S(t)I^*)I(t)}{\left(1
+k_1 S(t)+k_2 I(t) + k_3 S(t)I(t)\right)I^*}\Bigg)\\
&\leq (\lambda R^* - \mu S^*) \dfrac{(1+k_2 S^*)}{1+k_1 S^* 
+ k_2 I^* + k_3 S^*I^*} \dfrac{(S(t)-S^*)^2}{S(t)}\\
&\quad + aS^*I^*\Bigg(3 - \dfrac{(1+k_1 S(t) + k_2 I^* 
+ k_3 S(t)I^*)S^*}{(1+k_1 S^* + k_2 I^* + k_3 S^*I^*)S(t)}
-\dfrac{1+k_1 S(t)+k_2 I(t) + k_3 S(t)I(t)}{1+k_1 S(t) 
+ k_2 I^* + k_3 S(t)I^*} \\
&\quad - \dfrac{(1+k_1 S^* + k_2 I^* + k_3 S^*I^*)S(t)}{\left(1
+k_1 S(t)+k_2 I(t) + k_3 S(t)I(t)\right)S^*}\Bigg)
+ aS^*I^*\Bigg(-1-\frac{I(t)}{I^*}\\
&\quad + \dfrac{1+k_1 S(t)+k_2 I(t) + k_3 S(t)I(t)}{1
+k_1 S(t) + k_2 I^* + k_3 S(t)I^*}
+ \dfrac{(1+k_1 S(t) + k_2 I^* + k_3 S(t)I^*)I(t)}{\left(1
+k_1 S(t)+k_2 I(t) + k_3 S(t)I(t)\right)I^*}\Bigg)\\
&\leq \left(\lambda R^* - \mu S^*\right) 
\dfrac{\left(1+k_2 S^*\right)}{1+k_1 S^* + k_2 I^*
+ k_3 S^*I^*} \dfrac{(S(t)-S^*)^2}{S(t)}\\
&\quad + aS^*I^*\Bigg[\Bigg(3 
- \dfrac{\left(1+k_1 S(t) + k_2 I^* + k_3 S(t)I^*\right)S^*}{\left(1+k_1 S^*
+ k_2 I^* + k_3 S^*I^*\right)S(t)}-\dfrac{1+k_1 S(t)+k_2 I(t) 
+ k_3 S(t)I(t)}{1+k_1 S(t) + k_2 I^* + k_3 S(t)I^*}\\
&\quad - \dfrac{(1+k_1 S^* + k_2 I^* + k_3 S^*I^*)S(t)}{\left(1
+k_1 S(t)+k_2 I(t) + k_3 S(t)I(t)\right)S^*}\Bigg)
+ \ln \dfrac{\left(1+k_1 S(t) + k_2 I^* + k_3 S(t)I^*\right)S^*}{\left(1
+k_1 S^* + k_2 I^* + k_3 S^*I^*\right)S(t)}\\
&\quad + \ln \dfrac{\left(1+k_1 S(t)+k_2 I(t) + k_3 S(t)I(t)\right)S^*}{\left(1
+k_1 S(t) + k_2 I^* + k_3 S(t)I^*\right)S^*}
+ \ln \dfrac{\left(1+k_1 S^* + k_2 I^* + k_3 S^*I^*\right)S(t)}{\left(1
+k_1 S(t)+k_2 I(t) + k_3 S(t)I(t)\right)S^*}\Bigg]\\
&\quad + aS^*I^*\left[\dfrac{1+k_1 S(t)+k_2 I(t) 
+ k_3 S(t)I(t)}{1+k_1 S(t) + k_2 I^* + k_3 S(t)I^*}-1\right]\\
&\quad + aS^*I(t)\left[\dfrac{1+k_1 S(t) + k_2 I^* + k_3 S(t)I^*}{1
+k_1 S(t)+k_2 I(t) + k_3 S(t)I(t)} - 1\right]\\
&\leq (\lambda R^* - \mu S^*) \dfrac{(1+k_2 S^*)}{1+k_1 S^* 
+ k_2 I^* + k_3 S^*I^*} \dfrac{(S(t)-S^*)^2}{S(t)}\\
&\quad - aS^*I^*\Bigg[\Psi\left(\dfrac{\left(1
+k_1 S(t) + k_2 I^* + k_3 S(t)I^*\right)S^*}{\left(1
+k_1 S^* + k_2 I^* + k_3 S^*I^*\right)S(t)}\right) 
+ \Psi\left(\dfrac{\left(1+k_1 S(t)+k_2 I(t) + k_3 S(t)I(t)\right)S^*}{\left(1
+k_1 S(t) + k_2 I^* + k_3 S(t)I^*\right)S^*}\right)\\
&\quad + \Psi\left(\dfrac{\left(1+k_1 S^* + k_2 I^* + k_3 S^*I^*\right)S(t)}{\left(1
+k_1 S(t)+k_2 I(t) + k_3 S(t)I(t)\right)S^*}\right)\Bigg]\\
&\quad- \dfrac{aS^*\left(k_2 + k_3 S(t)\right)\left(1+k_1 S(t)\right)
\left(I(t)-I^*\right)^2}{\left(1+k_1 S(t)+k_2 I(t) 
+ k_3 S(t)I(t)\right)\left(1+k_1 S(t) + k_2 I^* + k_3 S(t)I^*\right)}.
\end{align*}
Clearly, $\Psi(x) \geq 0$. Consequently, $D^{\alpha}L^*(t) \leq 0$
if $R_0 > 0$ and $R^* \leq \frac{\mu}{\lambda} S^*$.
Further, the largest invariant set of
$\{(S,I,R)\in \mathbb{R}_+^3 : D^\alpha L^*(t) = 0\}$
is the singleton $\{E^*\}$. Therefore, from LaSalle's
invariance principle, ${E^*}$ is globally asymptotically stable.
\end{proof}

It is easy to see that condition \eqref{eq10} 
in Theorem~\ref{thm5.2} is equivalent to
\begin{gather}
\label{eq10:quivA}
\begin{aligned}
R_0 \leq 1
&+\dfrac{k_3a\beta\Lambda^2(\lambda^2r+a(\mu+r))(\mu+r)(\mu+\lambda)
+k_1a^2\beta\lambda^2r(\mu+\lambda)^2}{\lambda^2k_3a(\mu+r)^2(\mu+\lambda)(\mu+k_1\Lambda)}\\
&+\dfrac{k_1a\beta(a^2(\mu+\lambda)^2+\lambda^2r^2)(\mu+\lambda)
+\beta(\mu+r)\sqrt{\Delta}}{\lambda^2k_3a(\mu+r)^2(\mu+\lambda)(\mu+k_1\Lambda)}
\end{aligned}
\end{gather}
and
\begin{gather}
\label{eq10:quivB}
\begin{aligned}
&\lim\limits_{\lambda \rightarrow 0}
\Bigg(\dfrac{k_3a\beta\Lambda^2(\lambda^2r+a(\mu+r))(\mu+r)(\mu+\lambda)
+k_1a^2\beta\lambda^2r(\mu+\lambda)^2}{\lambda^2k_3a(\mu+r)^2(\mu+\lambda)(\mu+k_1\Lambda)}\\
&\qquad\quad+\dfrac{k_1a\beta(a^2(\mu+\lambda)^2+\lambda^2r^2)(\mu+\lambda)
+\beta(\mu+r)\sqrt{\Delta}}{\lambda^2k_3a(\mu+r)^2(\mu+\lambda)(\mu+k_1\Lambda)}\Bigg)
=+\infty.
\end{aligned}
\end{gather}

\begin{corollary}
The endemic equilibrium ${E^*}$ is globally asymptotically
stable if $R_0 > 1$ and $\lambda$ is sufficiently small.
\end{corollary}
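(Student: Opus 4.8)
The plan is to deduce the corollary directly from Theorem~\ref{thm5.2}, using the fact that $R_0 = \beta\Lambda/((\mu+\Lambda k_1)(\mu+r))$ does not involve $\lambda$. Thus the hypothesis $R_0 > 1$ guarantees, via Theorem~\ref{thm 4.1}(ii), that the endemic equilibrium $E^*$ exists for every admissible value of $\lambda$; it therefore suffices to show that the remaining hypothesis of Theorem~\ref{thm5.2}, namely condition \eqref{eq10}, is satisfied once $\lambda$ is taken small enough.

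To this end I would invoke the reformulation already recorded in the text: by \eqref{eq10:quivA}, inequality \eqref{eq10} is equivalent to $R_0 \leq 1 + G(\lambda)$, where $G(\lambda)$ is the strictly positive expression appearing on the right-hand side of \eqref{eq10:quivA}, obtained by substituting the explicit formulas for $S^*$, $R^*$, $c$ and $\Delta$ into $R^* \leq (\mu/\lambda)S^*$ and clearing denominators. By \eqref{eq10:quivB} one has $G(\lambda) \to +\infty$ as $\lambda \to 0^+$, whereas $R_0$ is a fixed finite number. Hence there exists $\lambda_0 > 0$ with $G(\lambda) \geq R_0 - 1$ for all $\lambda \in (0,\lambda_0)$, which is precisely condition \eqref{eq10}; Theorem~\ref{thm5.2} then yields that $E^*$ is globally asymptotically stable for each such $\lambda$.

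A more transparent alternative, which avoids the bulky expressions \eqref{eq10:quivA}--\eqref{eq10:quivB}, is to argue with the equilibrium formulas directly. As $\lambda \to 0^+$ the coefficient $c = a(\mu+\lambda)-\lambda r$ tends to $a\mu>0$; the unique endemic root $S^* \in (0,\Lambda/\mu)$ supplied by Theorem~\ref{thm 4.1}(ii) depends continuously on $\lambda$ and hence converges to some $S^*_0 \in [0,\Lambda/\mu]$, while $I^* = (\mu+\lambda)(\Lambda-\mu S^*)/c$ and $R^* = r(\Lambda-\mu S^*)/c$ stay bounded. Since $f(S^*,I^*)=a>0$ for all $\lambda$ and $f(0,\cdot)=0$, passing to the limit forces $S^*_0>0$; consequently $(\mu/\lambda)S^* \to +\infty$ while $R^*$ remains bounded, so \eqref{eq10} holds for $\lambda$ small, and Theorem~\ref{thm5.2} applies.

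The only genuine work lies in the bookkeeping. In the first route it is the verification of the asserted equivalence of \eqref{eq10} with \eqref{eq10:quivA} and of the limit \eqref{eq10:quivB} — both stated in the text just above the corollary, so they may be taken as given. In the second route it is confirming that $\lim_{\lambda\to 0^+}S^*$ does not degenerate to $0$, which I expect to be the main (and essentially only) obstacle; it follows because the limiting scalar equation $f(S,I)=a$ still has its endemic root strictly between $0$ and $\Lambda/\mu$ when $R_0>1$. Everything else is an immediate appeal to Theorem~\ref{thm5.2}.
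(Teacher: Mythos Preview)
Your first route is exactly the paper's argument: the corollary is stated immediately after the equivalences \eqref{eq10:quivA}--\eqref{eq10:quivB} precisely so that it follows at once from $G(\lambda)\to+\infty$ together with Theorem~\ref{thm5.2}. Your second, more direct, argument via continuity of $S^*$ and boundedness of $R^*$ is a valid alternative that the paper does not pursue.
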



\section{Numerical simulations}
\label{sec6}

In this section we present some numerical simulations
in order to illustrate our theoretical results.
The system \eqref{Eq1} is numerically integrated by using
the fractional Euler's method \cite{ref15}.
The approximate solutions of \eqref{eq2} with $0 <\alpha \leq 1$
are displayed in Figures~\ref{fig01} and \ref{fig02}.
The solutions converge to the equilibrium points.
The parameter values used in the simulations are: $\Lambda = 0.8$, $\mu = 0.1$,
$\lambda=0.5$, $\beta = 0.1$, $r = 0.5$, $k_1 = 0.1$, $k_2 = 0.02$, and $k_3 = 0.003$
with initial conditions $S(0) = 10.0$, $I(0) = 1.0$, $R(0) = 1.0$. Using the
MATLAB numerical computing environment, we get $R_0 = 0.7407$. Hence,
system \eqref{Eq1} has a unique disease-free equilibrium
$E_0 = (8, 0, 0)$. According to Theorem~\ref{thm5.1}, 
$E_0$ is globally asymptotically stable (see Figure~\ref{fig01}).
\begin{figure}[ht]
\centering
\includegraphics[scale=0.50]{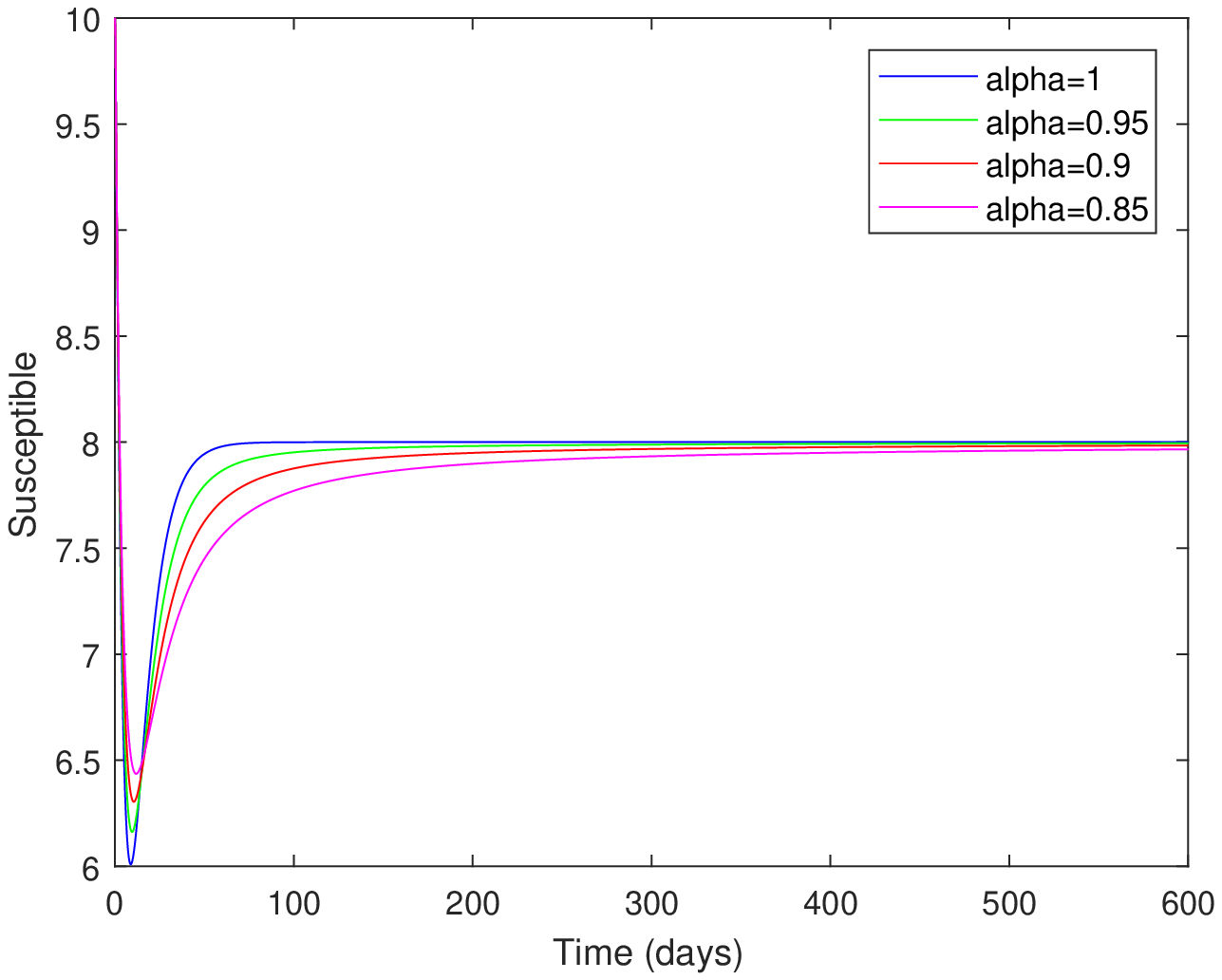}
\centering
\includegraphics[scale=0.50]{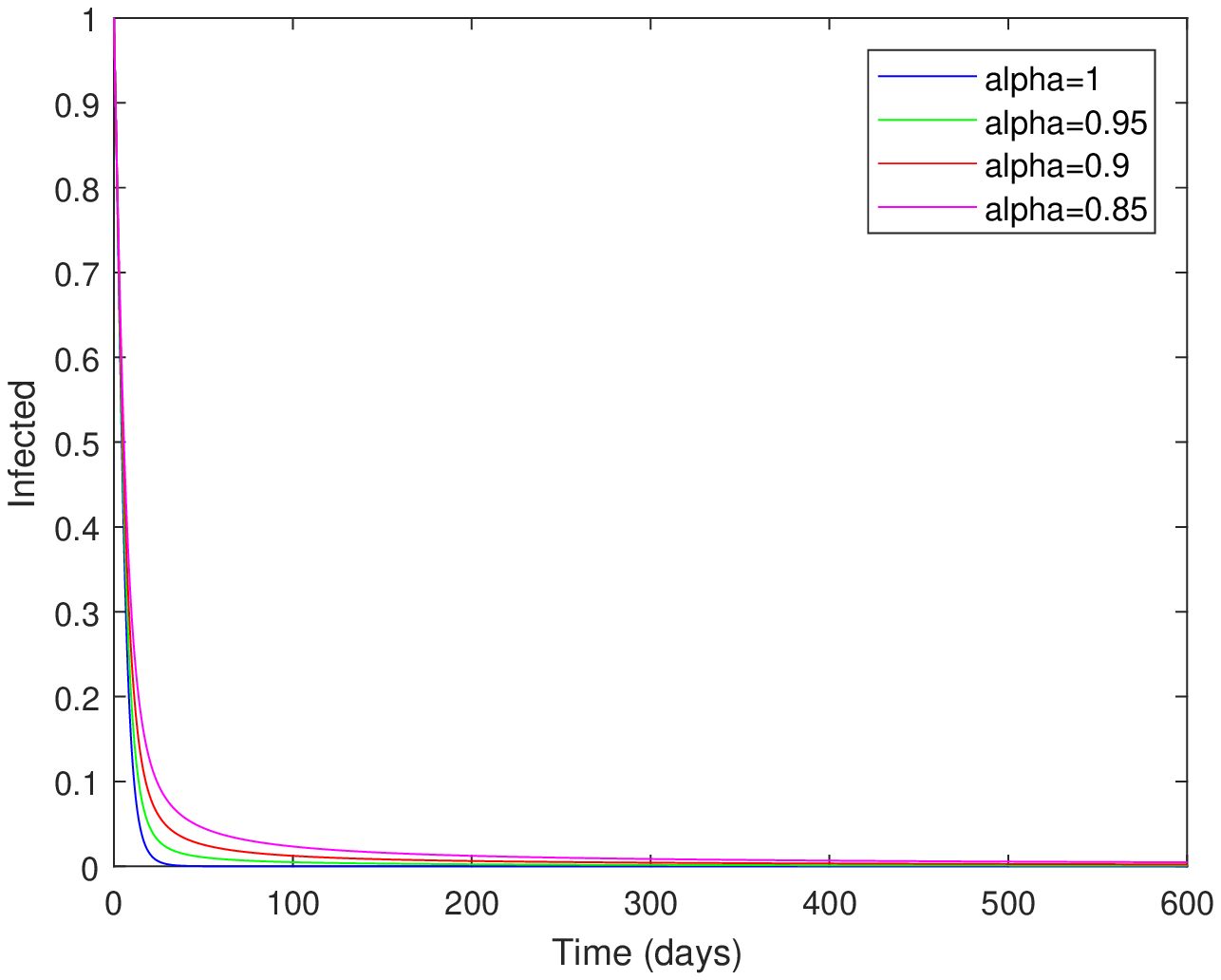}
\centering
\includegraphics[scale=0.50]{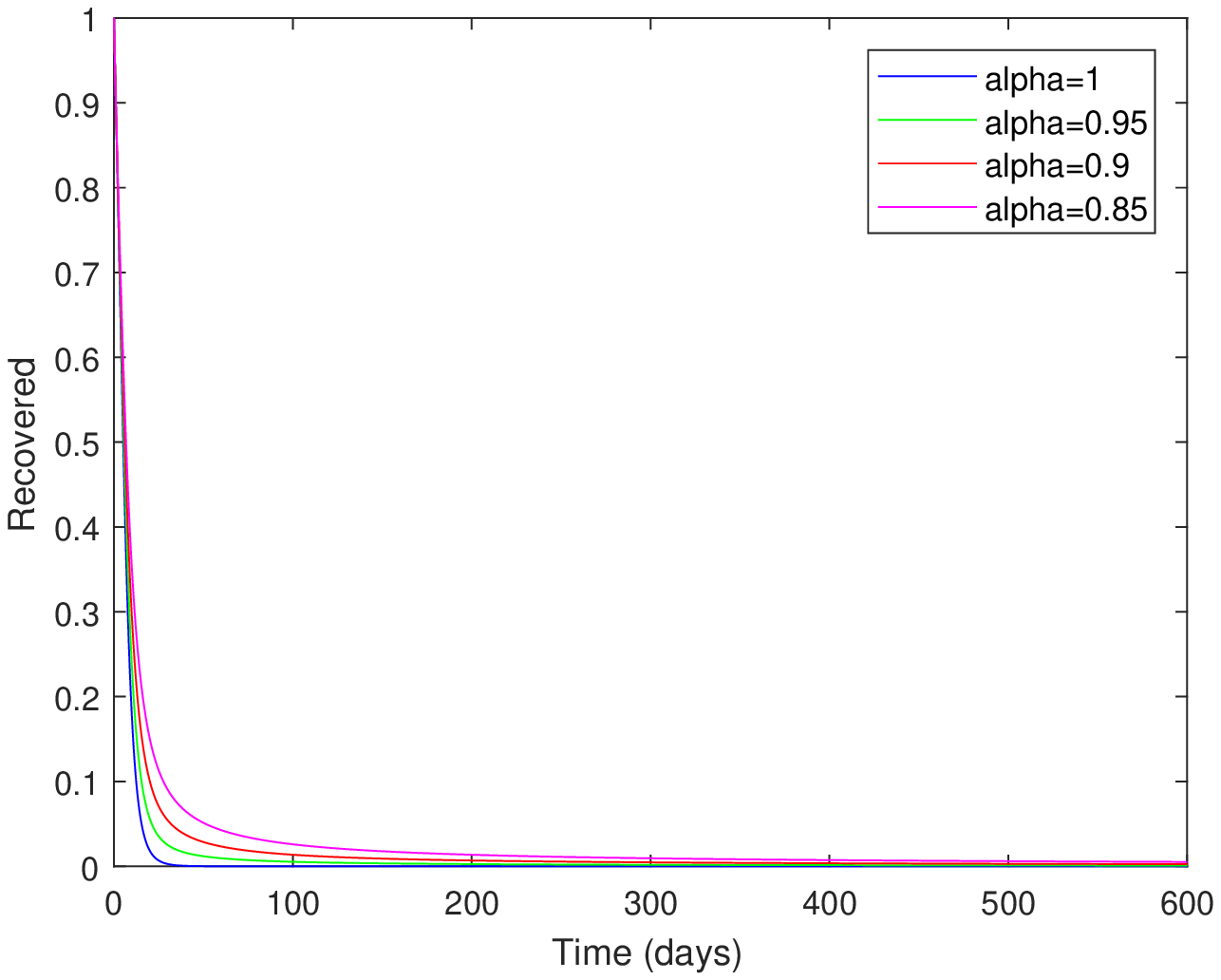}
\caption{Stability of the disease-free equilibrium $E_0$.}
\label{fig01}
\end{figure}

Now, let us choose $\mu = 0.02$ and keep the other parameter values.
Then, $R_0 = 1.5385$ and $R^* =0.552 \leq 21.8944$. Hence, the
condition~\eqref{eq10} is satisfied, as well as conditions
\eqref{eq10:quivA} and \eqref{eq10:quivB}, and the model converges
rapidly to its steady state for $\alpha =0.85$, when
compared to other fractional derivatives (see Figure~\ref{fig02}).
\begin{figure}[ht]
\centering
\includegraphics[scale=0.50]{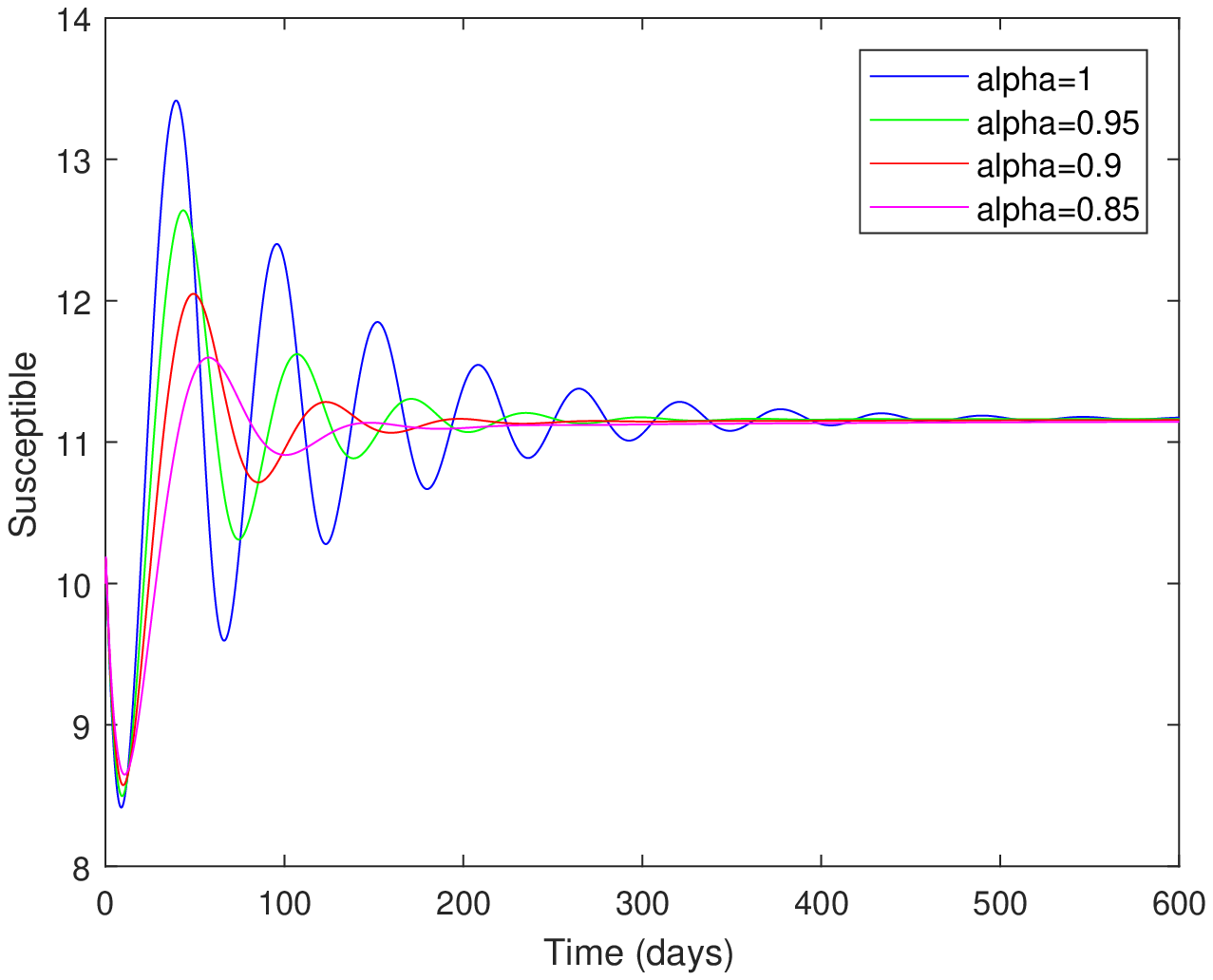}
\centering
\includegraphics[scale=0.50]{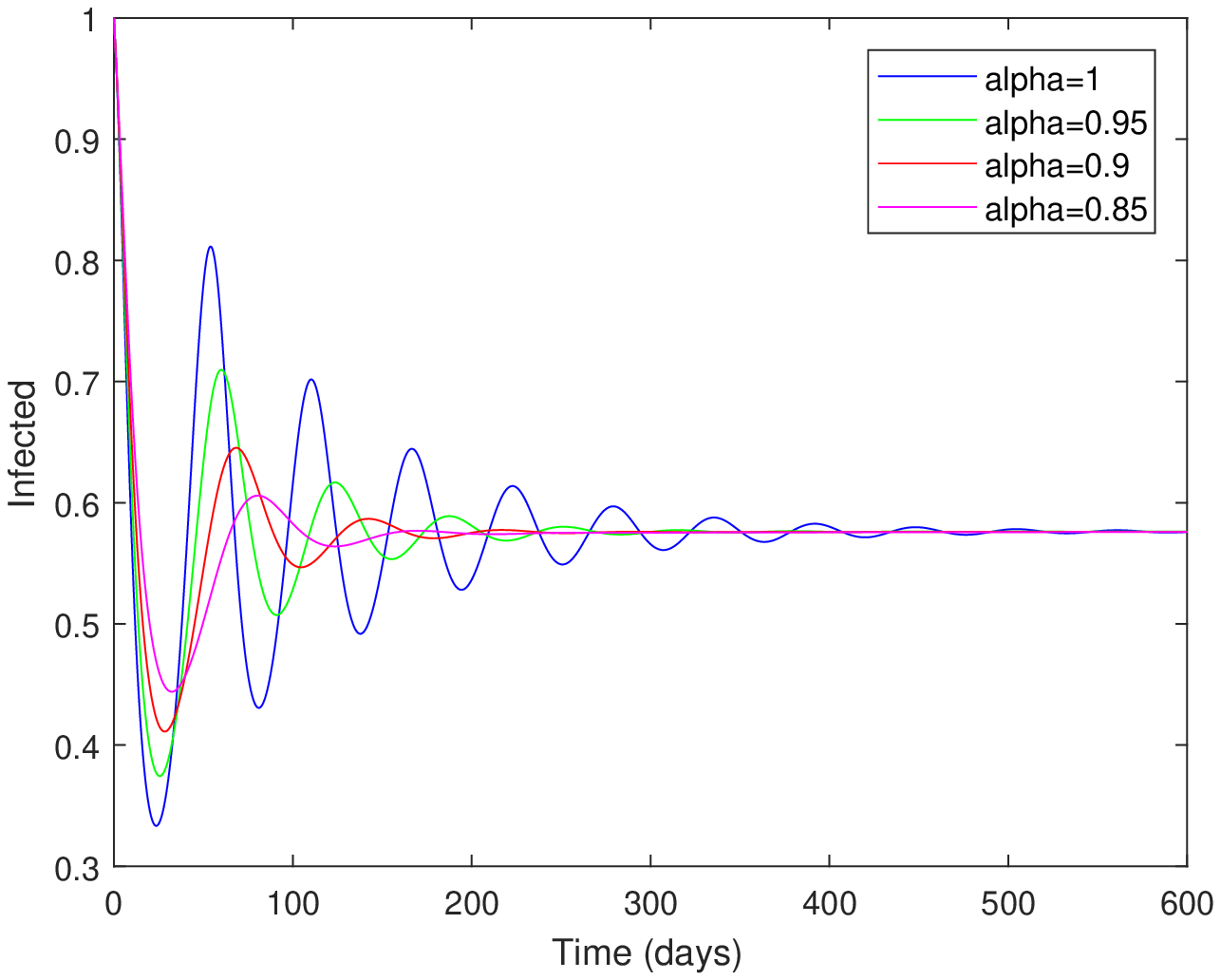}
\centering
\includegraphics[scale=0.50]{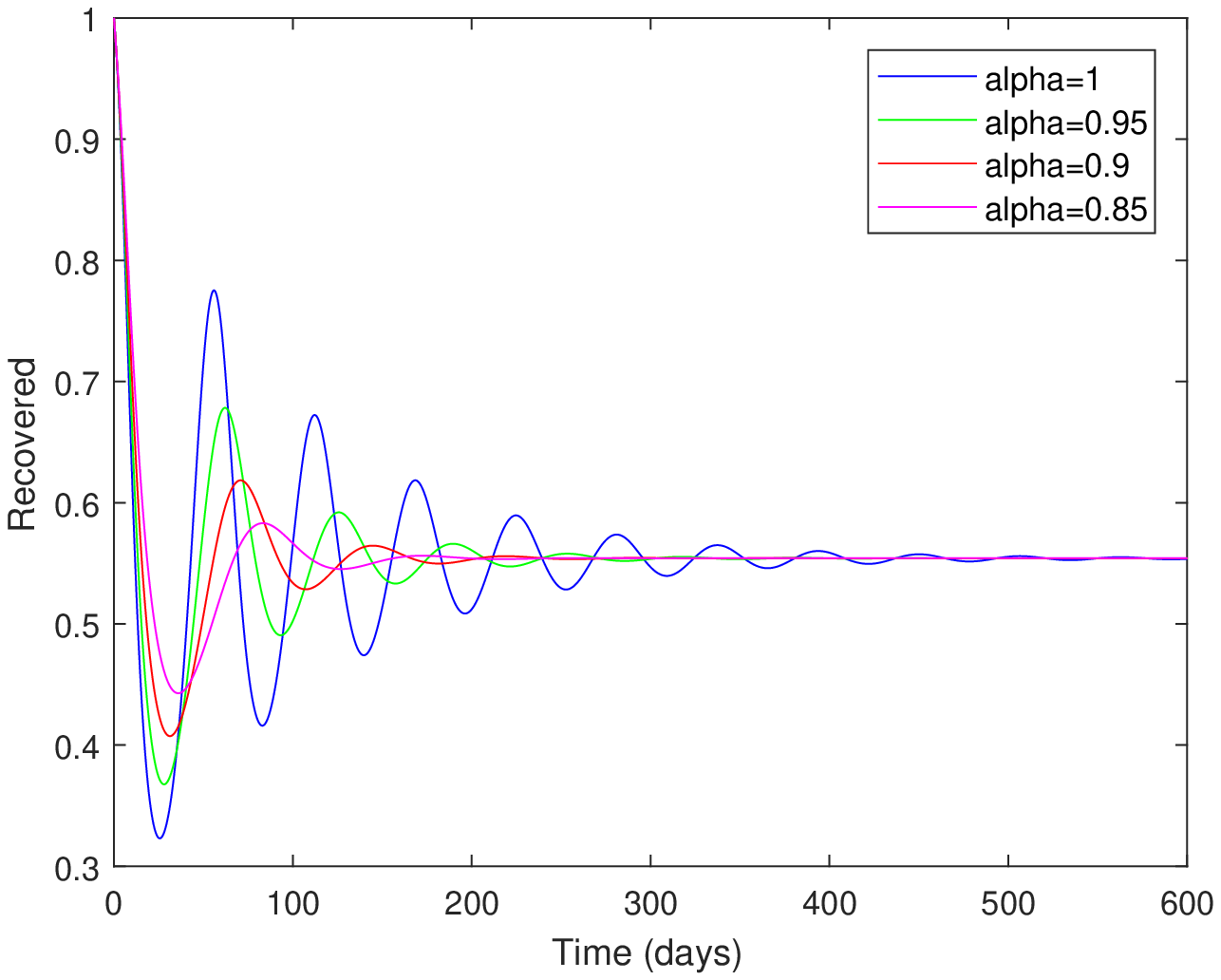}
\caption{Stability of the endemic equilibrium $E^*$.}
\label{fig02}
\end{figure}


\section{Conclusion}
\label{sec7}

The use of fractional order derivatives
can help to reduce errors arising from
the neglected parameters in modelling
real life phenomena \cite{ref8}.
Here, we have studied a fractional-order SIRS epidemic model
with a general incidence function.
The stability of equilibrium points
is investigated and numerical solutions are given.
According to our theoretical analysis, the fractional
order parameter $\alpha$ has no effect on the stability of free
and endemic equilibria, but it can affect the time
for arriving at the steady states. As future work, 
we plan to study the stability of a more general SIRS 
type model taking into account other parameters.


\section*{Acknowledgements}

This work was partially supported by
\emph{Funda\c{c}\~ao para a Ci\^encia e a Tecnologia} (FCT)
within the R\&D unit \emph{Centro de Investiga\c{c}\~{a}o 
e Desenvolvimento em Matem\'{a}tica e Aplica\c{c}\~{o}es} (CIDMA),
project UIDB/04106/2020. The authors are very grateful 
to two anonymous reviewers, for several critical remarks 
and precious suggestions.



\end{document}